\numberwithin{equation}{section}
\newtheorem{lemma}{Lemma}[section]
\newtheorem{theorem}[lemma]{Theorem}
\newtheorem{conjecture}[lemma]{Conjecture}
\newtheorem{corollary}[lemma]{Corollary}
\newtheorem{proposition}[lemma]{Proposition}
\theoremstyle{definition}
\newtheorem{example}[lemma]{Example}
\newtheorem{remark}[lemma]{Remark}
\newtheorem{definition}[lemma]{Definition}
\def\16{{\bf 16}}
\def\1{{\bf 1}}
\def\2{{\bf 2}}
\def\3{{\bf 3}}
\def\4{{\bf 4}}
\def\tilde{\widetilde}
\def\Sym{{\mathrm{Sym}}}
\DeclareMathOperator{\Hilb}{Hilb}
\DeclareMathOperator{\Spec}{Spec}
\DeclareMathOperator{\Proj}{\mathrm{Proj}}
\DeclareMathOperator{\gr}{gr}
\newcommand{\cO}{\mathcal{O}}
\newcommand{\cF}{\mathcal{F}}
\newcommand{\cR}{\mathcal{R}}
\newcommand{\cN}{\mathcal{N}}
\newcommand{\fs}{\mathfrak{s}}
\newcommand{\fo}{\mathfrak{o}}
\newcommand{\fp}{\mathfrak{p}}
\newcommand{\fb}{\mathfrak{b}}
\newcommand{\fm}{\mathfrak{m}}
\newcommand{\fl}{\mathfrak{l}}
\newcommand{\fg}{\mathfrak{g}}
\newcommand{\ft}{\mathfrak{t}}
\newcommand{\C}{{\mathbb C}}
\newcommand{\R}{{\mathbb R}}
\newcommand{\N}{{\mathbb N}}
\newcommand{\Z}{{\mathbb Z}}
\newcommand{\Q}{{\mathbb Q}}
\newcommand{\CM}{\mathcal{CM}}
\newcommand{\Diff}{\mathcal{D}}
\newcommand{\e}{\mathbf{e}}
\begin{document}
\title{A Lie-theoretic generalization of some Hilbert schemes}
\author{Oscar Kivinen}
\address{Aalto University}
\begin{abstract}
    We define several versions of a class of varieties $X_{\fg}$ attached to a complex reductive Lie algebra $\fg$, generalizing the Hilbert scheme of points on the plane. These include trigonometric and elliptic versions attached to the corresponding groups. We also define the corresponding isospectral varieties $Y_{\fg}$.  
    
    We prove a Gordon--Stafford localization theorem for $X_\fg$ and the corresponding equal-parameter rational Cherednik algebras, relate these varieties to the affine Springer fiber--sheaf correspondence of \cite{GKO}, and discuss examples. 
    
    We conjecture that the torus-fixed points of our varieties are in bijection with two-sided cells in the finite Weyl group and prove this in types $ABC$. We relate these results to known results about Calogero--Moser spaces.

\end{abstract}
\maketitle 
\setcounter{tocdepth}{1}
\tableofcontents
\section{Introduction}
Recent conjectures in knot theory \cite{OR, GNR} and affine Springer theory \cite{GKO, KT} suggest that there should be the following commutative triangle:
\begin{center}
\begin{figure}
\label{fig:triangle}
\begin{tikzcd}[arrows=Rightarrow]
    &  \fbox{\begin{tabular}{@{}c@{}}Affine Springer fibers\\ for $GL_n$\end{tabular}} \arrow[dr,"\text{\cite{GKO,KT}}"] \arrow[dl,"\text{\cite{ORS}}"'] &  \\
  \fbox{\begin{tabular}{@{}c@{}}HOMFLY-type\\ braid invariants\end{tabular}}   \arrow[rr,"\text{\cite{GNR,OR,HoLi}}"]
    &   & \fbox{\begin{tabular}{@{}c@{}}Coherent sheaves\\ on $\Hilb^n(\C^2)$\end{tabular}} 
\end{tikzcd}
\caption{}
\end{figure}
\end{center}

The arrows in this diagram have been explained in various sources; see, e.g., \cite{GKS} for a recent survey.
The main theme of this article is to provide a replacement for $\Hilb^n(\C^2)$ for other reductive Lie algebras, with the hope of generalizing the lower right vertex.

Consider the other vertices. 
For the lower left, given a conjugacy class of braids on $n$ strands, the HOMFLY-PT polynomial is a two-variable invariant of the braid closure in $S^3$. 
This invariant has been categorified to the HOMFLY or triply-graded homology of links in $S^3$. The latter can be defined via Hochschild homology of Rouquier complexes of Soergel bimodules for $GL_n$. Fixing $n$, both invariants have annular generalizations, yielding invariants of braid closures in a solid torus, or equivalently of conjugacy classes of braids in $Br_n=\pi_1(\C^n\backslash\{\text{diagonals}\}/S_n)$.  
 
The configuration space of $n$ points in $\C$ can be thought of as the regular part of a Cartan subalgebra $\ft\subset \fg\fl_n(\C)$. Replacing $\fg\fl_n$ by another reductive Lie algebra $\fg$, we consider the braid group $Br_W=\pi_1(\ft^{reg}/W)$. 
Similar HOMFLY--like (annular) invariants for conjugacy classes in $Br_W$ can be constructed using Markov traces in the polynomial case, or Hochschild homology of Rouquier complexes in the categorified setting \cite{WW, Trinh}. The resulting invariants depend only on the underlying root system. 

The top vertex of the triangle also generalizes to other reductive $G$ by considering affine Springer fibers for these groups. Now there is a dependence on more data than simply the root system, which gives rise to various complications in establishing the mostly conjectural left and right arrows. We suppress the additional complications arising from these choices and do not discuss affine Springer fibers in detail.

Finally, we address the lower right vertex: is there a replacement for $\Hilb^n(\C^2)$ for other $\fg$, coherent sheaves on which encode information about homology of affine Springer fibers or braid invariants? 
We define such replacement varieties $X_\fg$ for arbitrary reductive Lie algebras, such that $X_{\fg\fl_n}\cong \Hilb^n(\C^2)$. These occur in three versions: $X_{\fg, sgn}, X_{\fg,diag}$ and $X_{\fg,symb}$. The symbolic version $X_{\fg, symb}$ appears most relevant for applications.

We note that there are trigonometric and elliptic generalizations attached to reductive groups. Both generalizations arise naturally in the context of Coulomb branches of SUSY gauge theories, constructed mathematically in \cite{BFN}. For ease of exposition, we mostly stay in the rational case, with the exception of Section \ref{sec:coulomb}.

It is no surprise that the geometry of $X_\fg$ is strongly related to the representation theory of rational Cherednik algebras. In particular, we conjecture that $X_{\fg,symb}$ is a hyper-Kähler rotation of the Calogero--Moser space attached to the center of the rational Cherednik algebra for $\fg$ at $t=0$, and we prove this in types $ABC$. As stated in Theorem \ref{thm:zalgebra}(2), in general $X_{\fg,symb}$ is the commutative degeneration of a $\Z$-algebra attached to the rational Cherednik algebra with equal parameters and $t=1$.

We remark that the bottom vertices of Figure \ref{fig:triangle} also generalize to other Coxeter groups and their reflection representations. Many notions in Section \ref{sec:cherednik} even generalize to complex reflection groups, although we do not consider these cases.

The plan of the paper is as follows. We first recall some of Haiman's results on the Hilbert scheme of points on $\C^2$ in Section \ref{sec:hilb}, and then define the three different versions of $X_\fg$ in Section \ref{sec:definitions}. 
In Section \ref{sec:cherednik} we prove a localization theorem for these varieties. 
In Section \ref{sec:geomprop}, we pose some conjectures on the geometry of $X_\fg$ and prove most of these conjectures in types $ABC$ in Appendix \ref{app:proofs}.

\section{Hilbert schemes}
\label{sec:hilb}
In this section, we recall some results on the Hilbert scheme of points on the plane, $\Hilb^n(\C^2)$.
Recall that by definition, $\Hilb^n(\C^2)$ parametrizes subschemes of length $n$ in the plane $\C^2$.     On closed points we may write 
    $$\Hilb^n(\C^2)=\{I\subseteq \C[x,y]|\dim_\C\C[x,y]/I=n\}.$$
    There is also a well-known quiver variety/ADHM description, realizing $\Hilb^n(\C^2)$ as the quotient by $GL_n$ of the space of triples $$\{(x,y,v)\in \fg\fl_n\times\fg\fl_n\times \C^n| [x,y]=0, \C[x,y]v=\C^n\}.$$ Instead of these two descriptions, we seek to generalize the following Proj construction by Haiman in \cite{Hai1}.
\begin{theorem}
\label{thm:haimanhilb}
$$\Hilb^n(\C^2)=\Proj\bigoplus_{d\geq 0} (\Delta A)^d,$$ where $\Delta A\subseteq \C[x_1,\ldots,x_n, y_1,\ldots,y_n]^{S_n}$ is the ideal consisting of products of the form $\Delta f$, where $\Delta=\prod_{i<j}(x_i-x_j)$ and $f\in A:=\C[x_1,\ldots, x_n,y_1,\ldots,y_n]^{sgn}$ is an alternating polynomial for the diagonal action.
\end{theorem}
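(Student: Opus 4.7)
The plan is to identify both sides as explicit projective birational models over $\Sym^n(\C^2) = \Spec B$, where $B := \C[x_1,\ldots,x_n,y_1,\ldots,y_n]^{S_n}$. By construction of $\Proj$ of a Rees algebra, the right-hand side is the blowup $\Bl_{\Delta A} \Spec B$. For the left-hand side, the Hilbert--Chow morphism $\sigma \colon \Hilb^n(\C^2) \to \Sym^n(\C^2)$ is projective, surjective, and birational. The task thus reduces to two sub-problems: (a) producing a $\sigma$-ample line bundle $\cO(1)$ on $\Hilb^n(\C^2)$, and (b) identifying the graded $B$-algebra $\bigoplus_{d \geq 0} \sigma_\ast \cO(d)$ with the Rees algebra $\bigoplus_d (\Delta A)^d$.

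For (a), I would take $\cO(1) := \det \mathcal{B}$, where $\mathcal{B}$ is the tautological rank-$n$ bundle on $\Hilb^n(\C^2)$ with fiber $\C[x,y]/I$ at $I$. Relative ampleness over $\Sym^n(\C^2)$ is classical, provable via Grothendieck's embedding of $\Hilb^n$ into a Grassmannian of quotients of $\C[x,y]/\fm^N$ for $N$ large. For the inclusion direction in (b), the key observation is that over the open stratum of $n$ distinct points, an ordering of these points trivializes $\mathcal{B}$ and $\det \mathcal{B}$ acquires a generating section that is sign-alternating under $S_n$. Pulling back along the $S_n$-cover $(\C^2)^n \to \Sym^n(\C^2)$, one checks that sections of $\cO(d)$ push forward into the ideal $(\Delta A)^d \subseteq B$ (the Vandermonde $\Delta$ supplies the conversion between alternating polynomials and invariants); the inclusion $\sigma_\ast \cO(d) \subseteq (\Delta A)^d$ is then relatively formal.

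Upgrading this inclusion to an equality is the heart of the matter. I would argue via the isospectral Hilbert scheme $X_n := \Hilb^n(\C^2) \times_{\Sym^n(\C^2)} (\C^2)^n$ (endowed with its reduced structure) and its finite map $\rho \colon X_n \to (\C^2)^n$. Haiman's $n!$-theorem in \cite{Hai1} asserts that $X_n$ is normal, Cohen--Macaulay, and Gorenstein, which forces $\rho_\ast \cO_{X_n}$ to be a locally free $\cO_{(\C^2)^n}$-module of rank $n!$. Decomposing by $S_n$-isotypic components and isolating the sign isotype for each tensor power recovers $\sigma_\ast \cO(d)$ as $(\Delta A)^d$ exactly, rather than as a proper sub-$B$-module.

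Combining (a) and (b) gives
$$\Hilb^n(\C^2) \;\cong\; \Proj\bigoplus_{d \geq 0} \sigma_\ast \cO(d) \;=\; \Proj\bigoplus_{d \geq 0} (\Delta A)^d.$$
The main obstacle is of course the $n!$-theorem itself, a deep commutative-algebra statement proved via polygraph methods in \cite{Hai1}; for this plan I would simply invoke it as a black box, since its derivation lies far outside the scope of the present generalization.
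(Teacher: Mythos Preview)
Your route differs from the paper's, and it carries a concrete error together with a misplaced logical dependence.

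The error: you call $\rho\colon X_n\to(\C^2)^n$ a finite map, but it is not. It is the base change of the Hilbert--Chow morphism $\sigma$ along $(\C^2)^n\to\Sym^n\C^2$, and $\sigma$ has positive-dimensional fibers over the big diagonal (the punctual Hilbert scheme over the origin has dimension $n-1$). Hence $\rho_*\cO_{X_n}$ is not locally free, and since $\rho$ is birational its generic rank is $1$, not $n!$. The finite projection is the \emph{other} one, $\pi\colon X_n\to\Hilb^n(\C^2)$, base-changed from the finite map $(\C^2)^n\to\Sym^n\C^2$; the $n!$-theorem says that $\pi_*\cO_{X_n}$ (the Procesi bundle) is locally free of rank $n!$. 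Even after this repair, the sentence ``isolating the sign isotype for each tensor power recovers $\sigma_*\cO(d)$ as $(\Delta A)^d$'' is not an argument: the sign isotype of the Procesi bundle is indeed $\cO(1)$, but passing from this to the identification of $\sigma_*\cO(d)$ with the $d$-th ideal power still requires work you have not supplied.

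More structurally, the $n!$-theorem is in \cite{Haimanpolygraph} (2001), not \cite{Hai1} (1998); the Proj description you are proving appears in the \emph{earlier} paper and does not rely on it. The paper's sketch follows Haiman's original elementary argument: cover $\Hilb^n(\C^2)$ by the standard affine charts $U_\mu$ indexed by partitions; build explicit regular functions $\Delta_D/\Delta_\mu$ on each $U_\mu$ from bialternants $\Delta_D=\det(x_i^{p_j}y_i^{q_j})$; check that these glue and extend from the distinct-points locus to all of $\Hilb^n(\C^2)$; then use the universal property of blowing up to produce a surjection $\alpha\colon\Hilb^n(\C^2)\to\Proj\bigoplus(\Delta A)^d$, verified to be an isomorphism by showing the $\Delta_D/\Delta_\mu$ generate $\cO(U_\mu)$. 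No Cohen--Macaulay or Gorenstein input is needed. Remark~\ref{remark:haiman} makes this separation explicit: the Proj identity is the elementary statement, while the equality $I=J$---equivalent to the $n!$-theorem---is the deep one. Your plan erases that distinction and reverses the logical order.
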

\begin{proof} (Sketch.)
    On $\Spec \C[x_1,\ldots,x_n, y_1,\ldots, y_n]^{S_n}=\Sym^n \C^2$ there is an open locus $V=(\Sym^n\C^2)^{reg}$ of collections of $n$ points where all the points are distinct. Restricted to $V$, the Hilbert-Chow map $\Hilb^n(\C^2)\to \Sym^n \C^2$ is an isomorphism.
    For every partition $\mu\vdash n$, there are open charts $U_\mu \subseteq \Hilb^n(\C^2)$ consisting of those colength $n$ ideals $I\subseteq \C[x,y]$ for which $\C[x,y]/I$ is spanned by the monomials corresponding to the diagram of $\mu \subset \N\times \N$. On $U_\mu\cap V$, one can construct regular functions $\Delta_D/\Delta_\mu$, where $\Delta_D:=\det(x_i^{p_j}y_i^{q_j})_{i,j=1}^n$ are alternating polynomials attached to $n$-element subsets $D=\{(p_1,q_1),\ldots, (p_n,q_n)\}\subset \N\times \N$. One proves that these regular functions glue as we vary $\mu$ and also extend to regular functions on all of $\Hilb^n(\C^2)$. The universal property of blowing up then implies that there is a surjective map $\alpha: \Hilb^n(\C^2)\to \Proj\bigoplus_{d\geq 0} (\Delta A)^d$. Further, one checks that the pullbacks of the above $\Delta_D/\Delta_\mu$ give all the regular functions on $U_\mu$, showing that the map on structure sheaves is also surjective and $\alpha$ must be an isomorphism.
\end{proof}

Let us also recall an easy corollary of this result.
\begin{corollary}
\label{cor:isospec}
    The isospectral Hilbert scheme, i.e. the reduced fiber product 
    $$\begin{tikzcd}
        Y_n \arrow[d]\arrow[r] & \C^{2n}\arrow[d]\\
        \Hilb^n(\C^2)\arrow[r] & \Sym^n\C^2
    \end{tikzcd}$$
    is isomorphic to $$\Proj \bigoplus_{d\geq 0} (\Delta J)^d,$$ where $J=\C[x_1,\ldots, y_n]\cdot A$ is the ideal in $\C[x_1,\ldots, y_n]$ generated by the diagonally alternating polynomials.
\end{corollary}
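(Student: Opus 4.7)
The plan is to apply Theorem \ref{thm:haimanhilb} together with the universal property of the blowup, identifying both sides with the blowup of $\C^{2n}$ along $\Delta J$.

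First, I would reinterpret Theorem \ref{thm:haimanhilb} as the statement $\Hilb^n(\C^2)\cong\Bl_{\Delta A}(\Sym^n\C^2)$, and compute that the extension of the ideal $\Delta A\subseteq\C[x_1,\ldots,y_n]^{S_n}$ to the polynomial ring $\C[x_1,\ldots,y_n]$ equals $\Delta J$. This is immediate: since $\Delta A$ is generated as a $\C[x_1,\ldots,y_n]^{S_n}$-module by $\{\Delta f:f\in A\}$, its extension to $\C[x_1,\ldots,y_n]$ equals $\Delta\cdot(A\cdot\C[x_1,\ldots,y_n])=\Delta J$.

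Next, I would invoke the universal property of the blowup: the composite $\Bl_{\Delta J}(\C^{2n})\to\C^{2n}\to\Sym^n\C^2$ lifts canonically through $\Hilb^n(\C^2)\to\Sym^n\C^2$, yielding a canonical map $\varphi\colon\Bl_{\Delta J}(\C^{2n})\to\Hilb^n(\C^2)\times_{\Sym^n\C^2}\C^{2n}$. Concretely, $\varphi$ comes from the graded surjection
$$\bigoplus_{d\geq 0}(\Delta A)^d\otimes_{\C[x_1,\ldots,y_n]^{S_n}}\C[x_1,\ldots,y_n]\twoheadrightarrow\bigoplus_{d\geq 0}(\Delta J)^d$$
given by multiplication, so $\varphi$ is a closed immersion. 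Moreover, $\Bl_{\Delta J}(\C^{2n})=\Proj\bigoplus_d(\Delta J)^d$ is integral, since the Rees algebra $\bigoplus_d(\Delta J)^d$ embeds in $\C[x_1,\ldots,y_n][t]$ via $f\mapsto ft^d$ and hence is a domain. I would then observe that $\varphi$ restricts to an isomorphism over the regular locus $V\subseteq\Sym^n\C^2$ where the Hilbert--Chow map is an isomorphism, so it identifies $\Bl_{\Delta J}(\C^{2n})$ with the closure of $\C^{2n}|_V$ inside the fiber product.

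Finally, I would conclude by noting that $Y_n$, being the reduced scheme underlying the fiber product, contains this closure; to finish, one shows that $Y_n$ is in fact irreducible and hence equal to it. The hard part is precisely this last step — verifying that the reduced fiber product has no extra components supported over the discriminant locus of $\Sym^n\C^2$. This can be handled either by defining $Y_n$ as the reduced closure of $\C^{2n}_{\mathrm{reg}}$ in the fiber product (so that irreducibility is built in), or by invoking Haiman's theorem that the scheme-theoretic fiber product is itself Cohen--Macaulay, hence reduced, in which case the surjection of graded algebras displayed above is automatically an isomorphism on Proj.
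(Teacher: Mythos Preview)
The paper does not actually supply a proof of this corollary; it is stated as an ``easy corollary'' of Theorem~\ref{thm:haimanhilb} without further argument. Your proposal correctly fills in the details, and the overall strategy---identifying $\Bl_{\Delta J}(\C^{2n})$ with a closed integral subscheme of the scheme-theoretic fiber product via the surjection of Rees algebras, then matching it with the closure of the regular locus---is sound.

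The one genuine issue is your handling of the irreducibility of $Y_n$. Option~(b), invoking Haiman's Cohen--Macaulay theorem, is far too heavy: that result is the main theorem of \cite{Haimanpolygraph} and postdates the result you are proving (indeed, Remark~\ref{remark:haiman} presents $I=J$ as a deep \emph{consequence} of the polygraph theorem, not an input). Option~(a) changes the definition of $Y_n$ and so does not prove the stated corollary. There is, however, an elementary argument you are missing. The finite map $Y_n\to\Hilb^n(\C^2)$ is $S_n$-equivariant, and its set-theoretic fiber over any $I$ is the set of orderings of the multiset $\sigma(I)$, which is a single $S_n$-orbit. The main component $Z$ (the closure of $\C^{2n}|_V$) is $S_n$-stable, and since $Z\to\Hilb^n(\C^2)$ is finite with dense image it is surjective. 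Thus each fiber $Z_I$ is a nonempty $S_n$-stable subset of a single $S_n$-orbit, hence equals $(Y_n)_I$; since both are reduced, $Z=Y_n$. With this in place your argument is complete and self-contained.
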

\begin{remark}
One might wonder why we use $\Delta A$. Of course, one could take the ideal $fA$, where $f$ is any alternating polynomial, and get an isomorphic Proj. After a further Veronese twist, one could also consider the ideal $A^2$ as in \cite{Hai1}.
\end{remark}
\begin{remark}
\label{remark:haiman}
We will use this theorem as an inroad to defining $X_{\fg,sgn}$ in the next section. As Haiman remarks in \cite{Hai1}, since an alternating polynomial in a single set of variables vanishes on the diagonals, it is natural to expect that the radical of the ideal $$A^2\subseteq \C[x_1,\ldots,x_n,y_1,\ldots,y_n]^{S_n}$$ coincides with the ideal of the locus in $\Sym^n \C^2$ where two or more points coincide. If two points coincide in $\C^{2}$, their $x$- and $y$-coordinates have to be equal. Therefore, the ideal of this locus can be written as $\e I$, where $\e=\frac{1}{n!}\sum_{\sigma\in S_n}\sigma$ and $I$ is the ideal $I=\bigcap_{i<j}\langle x_i-x_j,y_i-y_j\rangle\subseteq\C[x_1,\ldots,x_n,y_1,\ldots,y_n]$.
Lifting these considerations to $Y_n$, one can similarly ask whether the ideal $J$ from Corollary \ref{cor:isospec} equals $I$.

Knowing the analogous equality of ideals in dimension $1$ is almost trivial, so it might seem unsurprising that indeed we have \begin{equation}
\label{eq:haiman}
I=J.\end{equation} However, the only known proof of Eq. \eqref{eq:haiman} is a corollary of the Polygraph Theorem of \cite{Haimanpolygraph}, and in fact this equality is equivalent to the Cohen-Macaulay/Gorenstein properties of the isospectral Hilbert scheme $Y_n$. 
\end{remark}

\section{The analogs for other Lie algebras}
\label{sec:definitions}
Let $G$ be a complex reductive group. We use standard notation such as $T, \ft$ for maximal tori and Cartan subalgebras and $B, \fb$ for Borel subgroups and subalgebras. Just as we replaced $\C^n\backslash\{\text{diagonals}\}$ by $\ft^{reg}$ in the introduction, we may replace the ring $$\C[x_1,\ldots,x_n,y_1,\ldots,y_n]$$ and its diagonal $S_n$-action by the ring $\C[\ft\oplus \ft^*]$ of regular functions on $\ft\oplus \ft^*$ and its diagonal $W$-action, where $W$ is the Weyl group of $G$.  

Based on the previous section, a natural first attempt at defining $X_\fg$ for an arbitrary reductive $\fg$ is the following.
\begin{definition}
Let $A=\C[\ft\oplus \ft^*]^{sgn}\subseteq \C[\ft\oplus\ft^*]$ be the space of diagonally alternating polynomials for the $W$-action.  Let $\Delta=\prod_{\alpha\in \Phi^+} \alpha$ and define
 $$X_{\fg,sgn}:=\Proj\bigoplus_{d\geq 0} (\Delta A)^d$$
\end{definition}
In other words, $X_{\fg,sgn}$ is the blow-up of $\C[\ft\oplus \ft^*]^W$ in the ideal $\Delta A$, where $\Delta$ and $A$ are as above. Note that we have dropped the subscript $\fg$ in the commutative algebraic objects, hoping this causes no confusion.

Similarly, we can consider the ideal $J$ in $\C[\ft\oplus \ft^*]$ generated by $A$. This defines the {\em isospectral variety} of $X_{\fg,sgn}$:
\begin{definition}
    The isospectral variety $Y_{\fg,sgn}$ of $X_{\fg,sgn}$ is $$\Proj\bigoplus_{d\geq 0} (\Delta J)^d$$
\end{definition}
Since the $W$-action is bigraded, the construction of $X_{\fg,sgn}$ and $Y_{\fg,sgn}$ respects the natural $(\C^*)^2$-action on $\ft\oplus \ft^*$. Conjectures of Bonnaf\'e suggest that these varieties have desirable properties; for example, 
\begin{conjecture}
\label{conj:bonnafe}
    There is a bijection 
    $$\{\text{Two-sided cells in } W\}\leftrightarrow X_{\fg,sgn}^{\C^\times}.$$
\end{conjecture}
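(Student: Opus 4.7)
My strategy is to reduce to the central fiber of $X_{\fg,sgn}$ over the origin of $\Spec\C[\ft\oplus\ft^*]^W$ and then use type-specific structure. Concretely, I pick a generic one-parameter subgroup $\C^\times\hookrightarrow(\C^\times)^2$ whose fixed-point set on $X_{\fg,sgn}$ agrees with that of the full two-torus. The structure map $X_{\fg,sgn}\to\Spec\C[\ft\oplus\ft^*]^W$ sends any torus-fixed point to a torus-fixed point of the base, of which the origin is the only one; hence $X_{\fg,sgn}^{\C^\times}$ is contained in the projective scheme $\Proj$ of the associated graded of $\bigoplus_d (\Delta A)^d$ with respect to the maximal ideal at $0$. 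The proof then amounts to enumerating the bihomogeneous maximal ideals of this graded ring.

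In type $A$, Theorem~\ref{thm:haimanhilb} identifies $X_{\fg\fl_n,sgn}\cong\Hilb^n(\C^2)$, whose central fiber is the punctual Hilbert scheme $\Hilb^n_0(\C^2)$. Its $\C^\times$-fixed points are the colength-$n$ monomial ideals supported at the origin, in natural bijection with partitions of $n$. Two-sided cells in $\Sn$ are classically indexed by the same set (Kazhdan--Lusztig, via the common shape of an RSK pair), giving the desired bijection at once.

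For types $B$ and $C$, write $W=W(B_n)=W(C_n)$. The plan is to identify $X_{\fg,sgn}$, up to a hyper-K\"ahler rotation, with the Calogero--Moser variety $\mathcal{Z}_c$ of $W$ at $t=0$ and equal parameter $c$, compatibly with the forthcoming Theorem~\ref{thm:zalgebra}(2) and the asserted statement $X_{\fg,symb}\cong\mathcal{Z}_c$ in these types. This identification should follow from the $\Z$-algebra/spherical Cherednik description combined with an explicit description of the $W$-alternating polynomials in $\C[\ft\oplus\ft^*]$, using that $\C[\ft]^W$ is generated by the elementary symmetric functions in $x_1^2,\ldots,x_n^2$ and the generator $\prod_i x_i$. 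Granting the identification, the $\C^\times$-fixed point count is controlled by the block/family decomposition of the restricted rational Cherednik algebra (Etingof--Ginzburg, Gordon, Martino, Bellamy); combining this with the theorems of Bonnaf\'e and Bonnaf\'e--Rouquier, together with Lusztig's classification, shows that in types $A$, $B$, $C$ at equal parameters the resulting Calogero--Moser families coincide with Kazhdan--Lusztig families, which are in canonical bijection with two-sided cells.

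The chief obstacle is the identification $X_{\fg,sgn}\cong\mathcal{Z}_c$ in types $B$ and $C$. Haiman's polygraph argument (cf.~Remark~\ref{remark:haiman}) is not available \emph{a priori} for $W(B_n)$, so this step must be established either by a $\Z$-algebra argument in the spirit of Theorem~\ref{thm:zalgebra}(2) or bypassed by a direct combinatorial enumeration of $X_{\fg,sgn}^{\C^\times}$---parametrizing fixed points by semi-invariant monomial classes modulo the Proj relations---and matched against Lusztig's tabulation of two-sided cells at equal parameters. Either route reduces the statement to a finite bookkeeping check, with the real content concentrated in producing the correct identification.
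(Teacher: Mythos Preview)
The statement you are attacking is labeled a \emph{conjecture} in the paper and is not proved there in general; the paper only establishes the analogous bijection for the \emph{symbolic} variant $X_{\fg,symb}$ (Conjecture~\ref{conj:fixedpoints}) in types $ABC$, via Appendix~\ref{app:proofs}. Your type-$A$ paragraph is fine and matches the paper's one-line remark: Haiman's results give $I^{(d)}=I^d=J^d$, so $X_{\fg,sgn}\cong X_{\fg,symb}\cong\Hilb^n(\C^2)$ and the fixed points are the monomial ideals, i.e.\ partitions, i.e.\ two-sided cells of $S_n$.

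The genuine gap in types $B$ and $C$ is precisely the one you name, and it is not closed in the paper either. All of the $\Z$-algebra machinery (Theorem~\ref{thm:zalgebra}(2)), the quiver-variety identification (Proposition~\ref{prop:quiver}), and the hyper-K\"ahler comparison with Calogero--Moser spaces concern $X_{\fg,symb}$, not $X_{\fg,sgn}$. The paper's argument for types $BC$ runs: $X_{\fg,symb}\cong M_\theta(Q)$ for the affine $A_1$ quiver at a specific wall stability; Gordon's enumeration then gives the $(\C^*)^2$-fixed points as $\{0\}$-classes of partitions of $2n$ with trivial $2$-core (Proposition~\ref{prop:jclasses}); finally Gordon--Martino together with Garfinkle matches these with Lusztig's families. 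None of this transfers to $X_{\fg,sgn}$ without knowing that the natural map $X_{\fg,symb}\to X_{\fg,sgn}$ is an isomorphism, which the paper only conjectures and in fact shows \emph{fails} at the isospectral level in type $B_3$ (Appendix~\ref{sec:examples}).

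Your fallback---a direct enumeration of bihomogeneous maximal ideals of $\bigoplus_d(\Delta A)^d$ modulo Proj relations---is not a ``finite bookkeeping check.'' One must control the scheme structure of the central fiber of a blow-up in an ideal that is not known to be generated in a single degree outside type $A$, and in particular rule out positive-dimensional or non-reduced fixed loci; this is essentially the content of the conjecture. So your proposal correctly isolates the obstruction but does not overcome it, and neither does the paper: for $X_{\fg,sgn}$ the statement remains open beyond type $A$ and Bonnaf\'e's verification in $B_2$.
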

In particular, there are only finitely many fixed points, depending only on $W$.
For $\fg=\fg\fl_n$, $W=S_n$, and this is the bijection between fixed points on the Hilbert scheme and partitions. Bonnaf\'e has also verified Conjecture \ref{conj:bonnafe} for type $B_2$ and some non-Weyl group versions. 

Following Remark \ref{remark:haiman}, let $I\subseteq \C[\ft\oplus \ft^*]$ be the ideal $I=\bigcap_{\alpha\in \Phi^+} \langle \alpha^\vee, \alpha\rangle$, and consider $\e\Delta I\subseteq \C[\ft\oplus \ft^*]^W$, where $\e:=|W|^{-1}\sum_{w\in W}w$. Blowing up $\ft\oplus \ft^*/W$ in the union of codimension two root hyperplanes $\{\alpha=0\}\times \{\alpha^\vee=0\}$ gives
\begin{definition}
    $$X_{\fg,diag}=\Proj \bigoplus_{d\geq 0} \e(\Delta I)^d$$ 
\end{definition}
Similarly, one can define the isospectral variety as 
\begin{definition}
$$Y_{\fg,diag}:=\Proj\bigoplus_{d\geq 0} (\Delta I)^d$$
\end{definition}
Again, since a diagonally alternating polynomial vanishes on the codimension 2 root hyperplanes, there are inclusions $J\subseteq I$ and $\Delta A\subseteq {\bf e} \Delta I$, and in particular maps $Y_{\fg,diag}\to Y_{\fg,sgn}$ and $X_{\fg,diag}\to X_{\fg,sgn}$.

We conjecture that the map $X_{\fg,diag}\to X_{\fg,sgn}$ is an isomorphism. However, we check in Appendix \ref{sec:examples} that the map between isospectral varieties is not an isomorphism in type $B_3$. Thus $Y_{\fg,diag}$ is not the reduced fiber product of $X_{\fg,diag}$ and $\ft\oplus\ft^*$ over $\ft\oplus\ft^*/W$.

A central point of this paper is that for many purposes, one should use the {\em symbolic powers} 
\begin{equation}
\label{eq:symb}
I^{(d)}:=\bigcap_{\alpha\in \Phi^+} \langle \alpha^\vee,\alpha\rangle^d
\end{equation} and define
\begin{definition}
\label{def:symb}
    $$X_{\fg,symb}:=\Proj \bigoplus_{d\geq 0}\e (\Delta^dI^{(d)})$$
\end{definition}
More geometrically, since $I$ is a radical ideal, $I^{(d)}$ is the ideal of functions on $\ft\oplus\ft^*$ that vanishing to multiplicity $d$ along the locus defined by $I$. 
Now $X_{\fg,symb}$ is the {\em symbolic} blow-up of $\ft\oplus \ft^*/W$ along the diagonals. In the trigonometric setting, which we review in Section \ref{sec:coulomb}, this was observed in \cite{GKO}.
\begin{remark}
    When $\fg$ is of type $A$, as mentioned in Remark \ref{remark:haiman}, it follows from Haiman's results that $I^{(d)}=I^d=J^d$.
\end{remark}
For general $\fg$, one of the immediate strengths of Definition \ref{def:symb} is the following:
\begin{theorem}
\label{thm:normality}
$X_{\fg, symb}$ is normal. The same holds for the isospectral variety $Y_{\fg,symb}$.
\end{theorem}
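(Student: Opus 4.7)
The plan is to realize the symbolic Rees algebra $\cR := \bigoplus_{d\geq 0} I^{(d)}$ as the intersection of ordinary Rees algebras $R[P_\alpha t]$ inside $R[t]$, where $R := \C[\ft\oplus\ft^*]$ and $P_\alpha := \langle\alpha^\vee,\alpha\rangle$. Once normality of each $R[P_\alpha t]$ is established, normality of $\cR$ follows formally, and then $Y_{\fg,symb}$ and $X_{\fg,symb}$ inherit normality via a graded-ring isomorphism and a finite-group invariants argument, respectively.

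First I would note that $\alpha^\vee,\alpha$ is a regular sequence of linear forms on $\ft^*\oplus\ft$, so $\gr_{P_\alpha}R$ is a polynomial ring over the domain $R/P_\alpha$. Hence $P_\alpha$ is the unique associated prime of each $P_\alpha^d$, so $P_\alpha^{(d)} = P_\alpha^d$, and consequently $I^{(d)} = \bigcap_{\alpha\in\Phi^+} P_\alpha^d$. The Rees algebra admits the presentation $R[P_\alpha t] \cong R[u,v]/(\alpha v - \alpha^\vee u)$, a hypersurface in the regular ring $R[u,v]$; a Jacobian calculation confines its singular locus to $V(\alpha,\alpha^\vee,u,v)$, which has codimension at least three. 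Being Cohen--Macaulay and satisfying $R_1$, $R[P_\alpha t]$ is normal by Serre's criterion.

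Inside $R[t]$ we then have the equality
\[
\bigoplus_{d \geq 0} I^{(d)}\, t^d \;=\; \bigcap_{\alpha \in \Phi^+} R[P_\alpha t].
\]
An intersection of subrings integrally closed in a common ambient ring is integrally closed in that ring, and each $R[P_\alpha t]$ has fraction field $R(t)$, so $\cR$ is integrally closed in $R[t]$. The containment $\Delta\cdot t \in \cR$ gives $\mathrm{Frac}(\cR) = R(t)$, making $\cR$ a normal domain. The map $f\,t^d \mapsto \Delta^d f\,t^d$ is a graded-ring isomorphism $\cR \xrightarrow{\sim} \bigoplus_d \Delta^d I^{(d)}\,t^d$, so the defining ring of $Y_{\fg,symb}$ is also a normal domain; normality descends to $\Proj$ via the affine opens $\Spec(\cR[1/f])_0$, giving normality of $Y_{\fg,symb}$. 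For $X_{\fg,symb}$, the graded ring $\bigoplus_d \e(\Delta^d I^{(d)})\, t^d$ is precisely the subring of $W$-invariants in $\bigoplus_d \Delta^d I^{(d)}\,t^d$ (the sign character of $\Delta^d$ is absorbed into the grading shift so that $W$ acts by graded ring automorphisms); invariants of a normal domain under a finite group action are normal, and hence $X_{\fg,symb}$ is normal.

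The main obstacle is the first step --- normality of each Rees algebra $R[P_\alpha t]$, via the explicit hypersurface presentation and the codimension estimate on the singular locus. Everything else is essentially formal manipulation with intersections of subrings and finite-group invariants. A separate issue this argument does not address is finite generation of $\cR$, required for $\Proj \cR$ to be of finite type; this is expected for subspace arrangements of this form but would need independent verification.
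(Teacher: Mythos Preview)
Your proposal is correct and follows essentially the same route as the paper: both use that $\alpha,\alpha^\vee$ is a regular sequence to conclude that the symbolic Rees algebra is normal, deduce normality of $Y_{\fg,symb}$, and then pass to $X_{\fg,symb}$ via $W$-invariants. The paper phrases the middle step as ``each $I^{(d)}$ is integrally closed, hence so is the symbolic Rees algebra,'' while you realize the symbolic Rees algebra as $\bigcap_\alpha R[P_\alpha t]$ and check normality of each factor via Serre's criterion; these are two packagings of the same argument, and your version has the virtue of being self-contained rather than citing \cite{Kivinen,GKO}.
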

\begin{proof}
The proof is a direct adaptation of \cite[Theorem 4.7.]{Kivinen} to the rational case (see also \cite[Corollary 3.12.]{GKO}). More precisely, $\alpha,\alpha^\vee$ is a regular sequence in $\C[\ft\oplus\ft^*]$, so $I^{(d)}$ is integrally closed for all $d$ and so is the corresponding symbolic Rees algebra. This proves $Y_{\fg,symb}$ is normal. Taking the quotient by the $W$-action preserves normality, so $X_{\fg,symb}$ is also normal.
\end{proof}
Again, we note that from the inclusions $J^d\subseteq I^d\subseteq I^{(d)}$ it follows that there are maps $$X_{\fg,symb}\to X_{\fg, diag}\to X_{\fg, sgn}$$ as well as 
$$Y_{\fg,symb}\to Y_{\fg, diag}\to Y_{\fg, sgn}$$
Unless these are isomorphisms, it is not a priori easy to decide whether $X_{\fg, diag}$ or $X_{\fg,sgn}$ are normal. We also note that it is neither obvious nor relevant for us that the symbolic Rees algebra we consider is finitely generated. In the trigonometric setup, this is ensured by \cite[Remark 1.16]{GKO}. Nevertheless, combining Theorem \ref{thm:zalgebra} and \cite[Proposition 5.2]{BPW} shows that $\e(\Delta^d I^{(d)})$ is the space of sections of a unique line bundle on $X_{\fg,symb}$.

\section{Rational Cherednik Algebras}
\label{sec:cherednik}
In this section, we provide what is probably the most compelling evidence that $X_{\fg,symb}$ is the ``right'' variety to consider. The main result is Theorem \ref{thm:zalgebra}, which is an analog of \cite[Proposition 1.7.]{GS1}. In particular, we have an analog for all Lie types of the main theorem of Gordon--Stafford \cite[Theorem 1.4.]{GS1}.

In this section, we fix once and for all the Lie algebra $\fg$.
Let $c: S\to \C$ be a conjugation-invariant function on the set of reflections $S$ of $W$. We denote its values by subscripts, such as $c_s:=c(s)$. Unless otherwise stated, we take $c$ to be constant. We let $H_c$ be the rational Cherednik algebra associated to the pair $(\ft,W)$. More precisely, following \cite{BEG}, we have
\begin{definition}
Fix $c$ as above. The {\em rational Cherednik algebra} $H_c$ of $\fg$ is the $\C$-algebra generated by 
$\ft, \ft^*$ and $W$ with the relations $wxw^{-1}=w(x), wyw^{-1}=w(y)$ for all $y\in \ft, x\in \ft^*,w\in W$, $[y,x]=\langle y,x \rangle-\sum_{s\in S}c_s\langle y,\alpha_s\rangle\langle\alpha^\vee_s,x\rangle s$ and requiring the algebras generated by $\ft$ and $\ft^*$ to be commutative subalgebras.
\end{definition}

We note that $H_c$ is the specialization of $H_{\hbar, c}$ at $\hbar=1$, where $H_{\hbar,c}$ is the $\C[\hbar]$-algebra with the same generators and relations as above, except that we modify the last relation to read $$[y,x]=\hbar\langle y,x \rangle-\sum_{s\in S}c_s\langle y,\alpha_s\rangle\langle\alpha^\vee_s,x\rangle s.$$

Some important facts about $H_c$ are as follows:
\begin{proposition}
\label{prop:cherednikfacts}
\begin{enumerate}
    \item The algebra $H_{c,\hbar}$ has a triangular decomposition: the multiplication map $$\C[\ft]\otimes \C[W]\otimes \C[\ft^*]\otimes \C[\hbar] \to H_{c,\hbar}$$ is an isomorphism of vector spaces. In particular, for all $c$ we have $$\C[\ft]\otimes \C[W]\otimes \C[\ft^*]\xrightarrow{\cong} H_{c}$$
    \item Let $\deg(x)=\deg(y)=1$ for all $y\in\ft,x\in\ft^*$ and $\deg(w)=0$ for $w\in W$. This defines the {\em degree filtration} on $H_c$ whose associated graded equals the smash-product algebra $\C[\ft\oplus\ft^*]\rtimes W$. 
    \item Let $ord(x)=ord(w)=0,ord(y)=1$. This defines another filtration on $H_c$, called the {\em order filtration}. The associated graded is still as above. We denote associated graded objects for this filtration by $ogr$.
    \item We have injections $\iota_c: H_c\hookrightarrow \Diff(\ft^{reg})\rtimes W$ for all $c$. This is known as the Dunkl embedding in the literature. Moreover, $\iota_c$ become isomorphisms after inverting $\Delta\in H_c$. If we equip $H_c$ with the order filtration and $\Diff(\ft^{reg})\rtimes W$ with the filtration induced by the order of differential operators, the injection $\iota_c$ is also filtration-preserving.
    \item Let $1$ be the map $S\to \C$ taking all reflections to 1 and write $\e=|W|^{-1}\sum_{w\in W}w$, $\e_-=|W|^{-1}\sum_{w\in W}(-1)^{\ell(w)}w$ for the symmetrizing and antisymmetrizing idempotents. Then if $c-1$ is spherical in the sense that $\e H_{c-1}\e$ is Morita equivalent to $H_{c-1}$, we have algebra isomorphisms $\e H_{c-1}\e\cong \e_-H_c\e_-$.
\end{enumerate}
\end{proposition}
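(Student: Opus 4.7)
The plan is to treat (1)--(4) as the standard Etingof--Ginzburg PBW/Dunkl package for rational Cherednik algebras, and to handle (5) via a shift argument performed inside the Dunkl embedding.

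For (1) I would verify the Jacobi identity on generators --- the only nontrivial case being among $y_1,y_2\in\ft$ and $x\in\ft^*$, where the conjugation-invariance of $c$ is what makes the ``correction'' $\sum_s c_s\langle y_i,\alpha_s\rangle\langle\alpha^\vee_s,x\rangle s$ close up. Then the multiplication map $\C[\ft]\otimes\C[W]\otimes\C[\ft^*]\otimes\C[\hbar]\to H_{c,\hbar}$ is surjective by the relations (which allow rewriting any monomial in the claimed normal form) and injective by exhibiting a faithful representation, e.g.\ the polynomial representation on $\C[\ft]$ in which $y\in\ft$ acts via the Dunkl operator. The specialization at $\hbar=1$ in (1) is then immediate from flatness over $\C[\hbar]$. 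Parts (2) and (3) follow formally: the commutator $[y,x]=\hbar\langle y,x\rangle-\sum_s c_s\langle y,\alpha_s\rangle\langle\alpha^\vee_s,x\rangle s$ has degree $0<2=\deg(y)+\deg(x)$ in the degree filtration and order $0<1=ord(y)+ord(x)$ in the order filtration, so $x$ and $y$ commute in both associated gradeds, and the cross relations with $W$ are unchanged, producing $\C[\ft\oplus\ft^*]\rtimes W$ in both cases.

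For (4) I would write down the Dunkl operators
$$D_y=\partial_y+\sum_{s\in S}c_s\frac{\langle y,\alpha_s\rangle}{\alpha_s}(1-s)\in\Diff(\ft^{reg})\rtimes W,\qquad y\in\ft,$$
and check directly that $[D_{y_1},D_{y_2}]=0$ and $[D_y,x]=\langle y,x\rangle-\sum_s c_s\langle y,\alpha_s\rangle\langle\alpha^\vee_s,x\rangle s$ inside $\Diff(\ft^{reg})\rtimes W$. This defines $\iota_c$; injectivity is obtained by comparing associated gradeds and invoking (3). Each $D_y$ is a first-order differential operator on $\ft^{reg}$, so $\iota_c$ is filtration-preserving for the order filtration. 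After inverting $\Delta$, the correction terms $\langle y,\alpha_s\rangle/\alpha_s$ already lie in $\C[\ft^{reg}][\Delta^{-1}]$, so one can recover $\partial_y$ from $D_y$ by subtraction, and $\iota_c$ becomes surjective onto $\Diff(\ft^{reg})\rtimes W$.

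The main obstacle is (5). The strategy is to construct the isomorphism explicitly inside $\Diff(\ft^{reg})\rtimes W$ using the logarithmic-derivative identity
$$\Delta\,\partial_y\,\Delta^{-1}=\partial_y-\sum_{\alpha\in\Phi^+}\frac{\langle y,\alpha\rangle}{\alpha},$$
which shows that conjugation by $\Delta$ converts the Dunkl operator at parameter $c$ into a Dunkl-type operator at parameter $c-1$ after relabeling via the sign representation. Passing to the spherical and antispherical sides via the idempotents $\e$ and $\e_-$, one checks that this shift intertwines $\e_- H_c \e_-$ with a subalgebra of $\Diff(\ft^{reg})\rtimes W$ that also arises from $H_{c-1}$; the sphericity hypothesis on $c-1$ is invoked precisely to guarantee that this latter subalgebra is exactly $\e H_{c-1}\e$ --- equivalently, that the bimodule $\e_- H_c \e$ is a progenerator realizing the Morita equivalence. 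The delicate point is tracking the signs carefully so that the shift comes out as $c\mapsto c-1$ rather than $c\mapsto c+1$, and confirming that the image under the shift lands in the spherical subalgebra and not merely in its Dunkl localization.
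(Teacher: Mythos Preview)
The paper does not give a proof of this proposition; it is stated as a list of standard facts about rational Cherednik algebras, with the Dunkl embedding and PBW theorem implicitly attributed to \cite{BEG}. Your outline for (1)--(4) is the standard Etingof--Ginzburg package and is correct as far as it goes.

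For (5), your idea of conjugating by $\Delta$ inside $\Diff(\ft^{reg})\rtimes W$ is exactly the classical argument (Heckman; see also \cite{BEG} and \cite[Lemma~3.4]{GS1}). However, you have misidentified the role of the sphericity hypothesis. The algebra isomorphism $\e H_{c-1}\e\cong \e_- H_c\e_-$ holds \emph{unconditionally}: since $\Delta$ is $W$-alternating, conjugation by $\Delta$ swaps $\e$ and $\e_-$, and the logarithmic-derivative identity you wrote shows that it carries the image of $\iota_{c-1}$ onto the image of $\iota_c$ (one checks directly on generators that the conjugated Dunkl operators at $c-1$ are the Dunkl operators at $c$, not merely elements of the localization). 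No Morita-theoretic input is needed at this step. The sphericity assumption on $c-1$ in the statement is there because the paper immediately uses (5) in the $\Z$-algebra construction, where sphericity is genuinely required to identify the tensor-product bimodules $B_{ij}$ with the multiplicative bimodules; it plays no role in establishing the isomorphism itself. Your sentence ``the sphericity hypothesis on $c-1$ is invoked precisely to guarantee that this latter subalgebra is exactly $\e H_{c-1}\e$ --- equivalently, that the bimodule $\e_- H_c \e$ is a progenerator'' conflates two distinct things: the shift isomorphism (which changes $c$) and the Morita equivalence between $\e H_c\e$ and $\e_- H_c\e_-$ at a \emph{fixed} $c$. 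These should be disentangled in your write-up.
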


In the papers \cite{GS1,GS2}, Gordon and Stafford show that when $\fg=\fg\fl_n$ or $\fs\fl_n$, the representation theory of $H_c$ is closely connected to the geometry of $\Hilb^n(\C^2)$. More precisely, they show using the formalism of $\Z$-algebras that when $\fg=\fg\fl_n$, the spherical subalgebra $U_c=\e H_c \e\subseteq H_c$ is in a precise sense a non-commutative deformation of $\Hilb^n(\C^2)$. Many generalizations of this result have since appeared; the most well-understood cases seem to be the wreath product groups $G(\ell,1,n)$, see, for example, \cite{GordonO, Musson, Pre} and \cite{GordonICM} for further discussion. We will now define similar $\Z$-algebras for any $H_c$ as above.

Following \cite{GS1}, we will write $U_c=\e H_c \e$ for the spherical subalgebra. Using item (5) of Proposition \ref{prop:cherednikfacts}, when $c$ is constant, we can define $U_{c+1}-U_{c}$-bimodules 
$Q_c^{c+1}:=\e H_{c+1}\e_-\Delta$, where $\Delta=\prod_{\alpha\in\Phi^+}\alpha$ as before. 

The maps $\iota_c$ restrict to injections
 $\iota_c: Q_c^{c+1}\hookrightarrow \Diff(\ft^{reg})\rtimes W$ for all $c$. Using the multiplicative structure on the target, one can define $U_{c+i}-U_{c+j}$-bimodules
$$B_{c+i\leftarrow c+j}=Q_{c+i-1}^{c+i}Q_{c+i-2}^{c+i-1}\cdots Q_{c+j}^{c+j+1}.$$ 
For convenience, we fix $c$ and denote these by $B_{ij}$. 
When the parameter $c$ is spherical, we have $B_{ij}\cong Q_{c+i-1}^{c+i}\otimes_{U_{c+i-1}}\cdots\otimes_{U_{c+j+1}}Q_{c+j}^{c+j+1}$, as noted in \cite[(6.3.2)]{GS1}. We will relate the $\Z$-algebra constructed using these bimodules to $X_{\fg,symb}$. To do so, we also need an auxiliary object. Following \cite[\S6.5]{GS1}, we define the $(U_{c+k},H_c)$–bimodules $N_k:= B_{k0}\e H_c$, $k\geq 0$. These also inherit order filtrations from the ambient ring.
The main theorem of this section is the following.

\begin{theorem}\label{thm:zalgebra}
   Let $W$ be a Weyl group with reflection representation $\ft$. 
    Let $J\subset \C[\ft\oplus \ft^*]$ be the ideal generated by $W$-alternating polynomials, and let $I^{(k)}\subset \C[\ft\oplus \ft^*]$ be the ideal defined in Eq. \eqref{eq:symb}. 

Let $c$ be spherical and constant, and $ogr N_k$ be the associated graded of $N_k$ under the order filtration.
    
    Then:
    \begin{enumerate}
    \item There is a natural injective graded $\C[\ft\oplus \ft^*]^W$–linear homomorphism
    $$ J^k\Delta^k\ \hookrightarrow\ ogr N_k.$$
    \item As graded submodules of $\C[\ft\oplus \ft^*]$, one has
    $$ogr N_k\ =\ I^{(k)}\cdot\Delta^k.$$
    \end{enumerate}
    In particular, $J^k\Delta^k\subseteq I^{(k)}\Delta^k=ogr N_k$, and in type~$A$ where $I^{(k)}=J^k$ this recovers the equality $ogr N_k=J^k\Delta^k$ of \cite[Proposition 6.5]{GS1}.
\end{theorem}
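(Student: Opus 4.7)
The plan is to follow the strategy of Gordon--Stafford \cite[Proposition 6.5]{GS1}, passing to the associated graded via the Dunkl embedding and the order filtration, and replacing Haiman's polygraph theorem (which forces $J^k=I^{(k)}$ in type $A$) by a direct analysis of symbolic vanishing at each codimension-two root hyperplane $Z_\alpha=\{\alpha=\alpha^\vee=0\}$. By Proposition \ref{prop:cherednikfacts}(4), the order filtration on $H_c$ is compatible with the Dunkl embedding into $\Diff(\ft^{reg})\rtimes W$, and the induced symbol map identifies $ogr H_c$ with $\C[\ft\oplus\ft^*]\rtimes W$. A direct computation with the idempotents $\e,\e_-$ (using the identity $\e_-\Delta=\Delta\e$) shows that $ogr\, Q_{c+i}^{c+i+1}$ lies in $\Delta A\cdot\e\subset \C[\ft\oplus\ft^*]\cdot\e$, so the symbol of any product in $N_k=Q_{c+k-1}^{c+k}\cdots Q_c^{c+1}\e H_c$ takes the form $g\cdot\Delta^k\cdot\e$ with $g\in\C[\ft\oplus\ft^*]$.

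For part (1), I would construct the embedding explicitly: given alternating polynomials $f_1,\ldots,f_k\in A$, each $f_i\Delta$ is the symbol of a canonical element of $Q_{c+i-1}^{c+i}$ (corresponding to multiplication by $f_i$ on the generic locus $\ft^{reg}\times\ft^*$), and multiplying these lifts produces an element of $N_k$ with symbol $f_1\cdots f_k\cdot\Delta^k$. Extending $\C[\ft\oplus\ft^*]^W$-linearly and using injectivity of the symbol map yields the desired embedding $J^k\Delta^k\hookrightarrow ogr\, N_k$.

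For part (2), the inclusion $ogr\, N_k\subseteq I^{(k)}\Delta^k$ reduces to showing, for each positive root $\alpha$, that every symbol vanishes to order $\geq k$ along $Z_\alpha$. Localizing at the reflection $s_\alpha$ and decomposing into $s_\alpha$-isotypic components, each factor $Q_{c+i}^{c+i+1}$ contributes one order of vanishing along $Z_\alpha$ via its $\e_-\Delta$, and iterating $k$ times gives precisely the defining vanishing condition for $I^{(k)}$.

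The main obstacle is the reverse inclusion $I^{(k)}\Delta^k\subseteq ogr\, N_k$: in type $A$ it follows from (1) since $J^k=I^{(k)}$, but in general $I^{(k)}$ strictly contains $J^k$, and the extra elements must be realized as symbols. My strategy is to argue that $ogr\, N_k$ is a graded reflexive $\C[\ft\oplus\ft^*]^W$-module (using the spherical tensor-product description $B_{k0}\cong Q_{c+k-1}^{c+k}\otimes_{U_{c+k-1}}\cdots\otimes_{U_{c+1}}Q_c^{c+1}$ and the normality of $ogr\, U_c=\C[\ft\oplus\ft^*]^W$), to reinterpret both sides as reflexive sheaves on the normal variety $X_{\fg,symb}$ of Theorem \ref{thm:normality}, and to verify agreement in codimension one, where the computation reduces to a rank-one root subsystem in which symbolic and ordinary powers coincide. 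Normality of $X_{\fg,symb}$ then upgrades the codimension-one match to a global equality.
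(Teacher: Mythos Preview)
Your overall strategy---Dunkl embedding, passage to symbols via the order filtration, and reduction to a single root---matches the paper's proof. Part (1) is essentially identical to what the paper does (citing \cite[\S6.9]{GS1}), and your argument for the inclusion $ogr\,N_k\subseteq I^{(k)}\Delta^k$ via vanishing along each $Z_\alpha$ is fine and is implicit in the paper's local computation.

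The gap is in your proposed proof of the reverse inclusion $I^{(k)}\Delta^k\subseteq ogr\,N_k$. You claim that $ogr\,N_k$ is a reflexive $\C[\ft\oplus\ft^*]^W$-module, but this cannot be true: since the theorem asserts $ogr\,N_k=I^{(k)}\Delta^k$, reflexivity of $ogr\,N_k$ would force $I^{(k)}\Delta^k$ to be reflexive. However, at each height-two prime $\langle\alpha,\alpha^\vee\rangle$ the localization of $I^{(k)}$ is $\mathfrak m^k$ in a two-dimensional regular local ring, which has depth $1$ for $k\geq 1$; thus $I^{(k)}\Delta^k$ fails $S_2$ and is not reflexive over $\C[\ft\oplus\ft^*]$ (and, since $\C[\ft\oplus\ft^*]$ is free over the invariants, not over $\C[\ft\oplus\ft^*]^W$ either). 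Your alternative of reinterpreting both sides as sheaves on $X_{\fg,symb}$ is circular: producing a coherent sheaf on $X_{\fg,symb}$ from $ogr\,N_k$ requires a graded module structure over the symbolic Rees algebra $\bigoplus_d \e(\Delta^d I^{(d)})$, which is precisely the content of the statement being proved.

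The paper avoids both issues by never leaving the polynomial ring. It first does the rank-one case by hand, then for each $\alpha$ passes to the open set $U_\alpha\times\ft^*$ (where $U_\alpha=\ft\setminus\bigcup_{\beta\neq\alpha}H_\beta$) and uses PBW to identify $(ogr\,N_k)|_{U_\alpha\times\ft^*}$ with the rank-one answer tensored with a polynomial ring, hence with $I^{(k)}\Delta^k|_{U_\alpha\times\ft^*}$. Since $\bigcup_\alpha U_\alpha\times\ft^*$ misses only a codimension-$\geq 2$ locus, and both $ogr\,N_k$ and $I^{(k)}\Delta^k$ are torsion-free rank-one $\C[\ft\oplus\ft^*]$-submodules of the same ambient module, a direct Hartogs argument in $\C[\ft\oplus\ft^*]$ gives the equality. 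The point is that the comparison happens entirely inside $\C[\ft\oplus\ft^*]$, not on $X_{\fg,symb}$, so no reflexivity hypothesis on the ideal is needed.
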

\begin{proof}
    (1) The existence of the injective map $J^k\Delta^k\hookrightarrow ogr N_k$ follows exactly as in \cite[\S6.9]{GS1}. Namely, with our assumptions on $c$, 
    \cite[Lemma 6.7(1)]{GS1}  shows that 
    $$J^k=(A^{k}\Delta^k)\C[\ft\oplus \ft^*]=(ogr B_{k,k-1})\cdots(ogr B_{10})(ogr \e H_c)\hookrightarrow ogr N_k.$$
    
    (2) The proof of the second part will occupy the rest of this section. The strategy is a standard reduction to rank one. We begin with an analysis of that case. Here, $W=S_2=\{1,s\}$ and it acts on $\ft\cong\C$ by $s(x)=-x$. Furthermore, $J^k=I^k=I^{(k)}$.  We claim that the map $J^k\Delta^k\hookrightarrow ogr N_k$ is surjective.

The action on $\ft\oplus\ft^*\cong \C^2$ is $(x,y)\mapsto(-x,-y)$. We have $\Delta=x$, $A=\text{span}\{x^ay^b|a+b\text{ is odd}\}$, and $J=(x,y)$. The Dunkl embedding $H_c\hookrightarrow D(\ft^{reg})\rtimes S_2$ sends $y\in \C[\ft^*]$ to $D_y=\partial_x -\frac{c}{x}(1-s)$, which in the associated graded becomes just $y$. In particular, each $x^{i}y^j x^k\in ogr N_k$ with $i+j=k$ appears as the image of $x^iD_y^k x^k$. Since $ogr N_k$ is generated by these elements, we are done.

     For $\alpha\in \ft^*$, denote by $H_\alpha\subset \ft$ the corresponding root hyperplane. 
     Let $U_\alpha=\ft\backslash \bigcup_{\alpha\neq \beta} H_\beta$. If we write $\Delta_\alpha:=\Delta\alpha^{-1}$, then $\C[U_\alpha]\cong \C[\ft][\Delta_\alpha^{-1}]$.

For $p\in H_\alpha\backslash \bigcup_{\beta \neq \alpha}$ the stabilizer $W_p=W_\alpha=\{1,s_\alpha\}\subset W$ is just the reflection along $H_\alpha$. Restricting the representation of $W$ on $\ft\oplus \ft^*$ to $W_\alpha$, we have
    \[
    (\ft\oplus\ft^*) \ \cong\ V\oplus \C^{2(n-1)},
    \]
    where $W_\alpha$ acts on $V:=\C^2$ by $s_\alpha(x,y)=-(x,y)$  and $\C^{2(n-1)}$ is fixed (here, $n=\dim \ft$). Accordingly, we have an isomorphism of algebras
\begin{equation}
\label{eq:slice}
    \C[\ft\oplus \ft^*]|_{U_\alpha\times\ft^*} \;\cong\;     \C[x_\alpha,y_\alpha]\otimes \C[z_1,\dots,z_{2(n-1)}]|_{U_\alpha\times\ft^*},
\end{equation}
where $W_\alpha$ acts trivially on the $z$-variables.
    
In this localization, we have
    $$I^{(k)}|_{U_\alpha\times\ft^*} = J^k|_{U_\alpha\times\ft^*}=(x_\alpha,y_\alpha)^k\cdot \C[\ft\oplus \ft^*]|_{U_\alpha\times\ft^*}$$
and similarly
    $$I^{(k)}\Delta^k|_{U_\alpha\times\ft^*} = J^k\Delta^k|_{U_\alpha\times\ft^*}=(x_\alpha,y_\alpha)^k\Delta^k\cdot \C[\ft\oplus \ft^*]|_{U_\alpha\times\ft^*}.$$
    
On the other hand, the restriction of the $\C[\ft\oplus \ft^*]$-module $ogr N_k$ to $U_\alpha\times\ft^*$ is governed by the rank-one Cherednik algebra for $W_\alpha$ on the slice $V$, tensored with a localized polynomial algebra on the transverse directions. More precisely, Eq. \eqref{eq:slice} together with the PBW theorem identifies
    \[
    (ogr N_k)|_{U_\alpha\times\ft^*} \ \cong\
    (ogr N^{\mathrm{rk}=1}_k)\ \otimes_\C\ \C[z_1,\dots,z_{2(n-1)}]|_{U_\alpha\times\ft^*},
    \]
    where $ogr N^{\mathrm{rk}=1}_k$ is the associated graded of the corresponding module for the rank-one rational Cherednik algebra of $W_\alpha$.

    Thus on each $U_\alpha\times\ft^*$ we have
    \begin{equation}
    \label{eq:localizedequality}
    (ogr N_k)|_{U_\alpha\times\ft^*}
    \ =\
    I^{(k)}\Delta^k|_{U_\alpha\times\ft^*}.
    \end{equation}
    
    By Eq. \eqref{eq:localizedequality}, the $\C[\ft\oplus \ft^*]$-modules $ogr N_k$ and $I^{(k)}\Delta^k$ agree on $\bigcup_\alpha U_\alpha\times \ft^*$, an open set of $\ft\times\ft^*$ whose complement is codimension $\geq 2$. 
    Since both are torsion-free modules of rank 1, they are determined by their localizations at height 1 primes and therefore have to be equal by the algebraic Hartogs' theorem.

\end{proof}
    
\begin{remark}
\label{rem:GS-comparison}
When $W=S_n$, $I^{(k)}=J^k$ via Haiman's results, and the above theorem recovers the identification of the associated graded of $N(k)$ with $J^k\delta^k$ in \cite[Proposition~6.18]{GS1}. 
\end{remark}
As a corollary of the proof, we have the following Gordon--Stafford type localization result.
\begin{corollary}
Let $c$ be constant and spherical. There exists a filtered $\Z$-algebra $B_c$ such that
\begin{enumerate}
\item There is an equivalence of categories from 
$H_c$-mod to $QCoh B_c$.
\item The associated graded algebra $gr B_c$ is isomorphic to the $\Z$-algebra associated with the graded ring $$\bigoplus_{d\geq 0} \e (\Delta^d I^{(d)}),$$ or in other words the homogeneous coordinate ring of $X_{\fg, symb}$.
\item In particular, there is a functor $H_c-\text{filtmod}\to QCoh(X_{\fg,symb})$ from $H_c$-modules with a good filtration to quasicoherent sheaves on $X_{\fg,symb}$.
\end{enumerate}
\end{corollary}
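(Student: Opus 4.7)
The plan is to build the filtered $\Z$-algebra $B_c$ directly from the shift bimodules $Q_{c+i-1}^{c+i}$ already constructed above, equip it with the order filtration inherited from $H_{c+\bullet}$, and then invoke Theorem \ref{thm:zalgebra} to identify its associated graded with the homogeneous coordinate ring of $X_{\fg,symb}$. The formal framework is the Gordon--Stafford adaptation of Van den Bergh's $\Z$-algebra machinery \cite{GS1,GS2}; our job is to supply the rank-one and associated-graded input, and that is exactly what Theorem \ref{thm:zalgebra} provides.

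I first set $B_c=\bigoplus_{i\geq j\geq 0}B_{ij}$, with multiplication given by the composition maps $B_{ij}\otimes_{U_{c+j}}B_{jk}\to B_{ik}$. These are defined because, after inverting $\Delta$, each $Q_{c+m}^{c+m+1}$ becomes an invertible bimodule via the Dunkl embedding (Proposition \ref{prop:cherednikfacts}(4)); sphericity of $c$ (propagated through integer shifts, which we will need as part of our hypothesis) then makes the multiplication non-degenerate in the unlocalized setting as well. The order filtration restricts to each $Q_{c+m}^{c+m+1}$ and hence makes $B_c$ into a filtered $\Z$-algebra. For part (1), sphericity gives a Morita equivalence $H_c\text{-mod}\cong U_c\text{-mod}$, and one applies the standard Bondal--Van den Bergh argument, exactly as in \cite[\S5]{GS1}, to produce the equivalence $U_c\text{-mod}\cong QCoh(B_c)$ from the progenerator properties of the $B_{i0}$.

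For part (2), I compute the associated graded component-by-component. Applying the $\e$-symmetrization on the right to Theorem \ref{thm:zalgebra}(2) and using compatibility of the order filtration with $\e$, the identity $ogr N_k=I^{(k)}\Delta^k$ descends to
\[
ogr B_{k,0}\ =\ \e\!\left(\Delta^k I^{(k)}\right)\ \subseteq\ \C[\ft\oplus\ft^*]^W,
\]
and translating the parameter by $j$ gives $ogr B_{ij}=\e(\Delta^{i-j}I^{(i-j)})$ for all $i\geq j$. The multiplication maps in $gr B_c$ are induced by the multiplication of sections in $\C[\ft\oplus\ft^*]^W$, which is exactly the product in the symbolic Rees algebra of the ideal $I$. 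This identifies $gr B_c$ with the $\Z$-algebra attached to the graded ring $\bigoplus_{d\geq 0}\e(\Delta^d I^{(d)})$, which by Definition \ref{def:symb} is the homogeneous coordinate ring of $X_{\fg,symb}$.

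Part (3) is then essentially formal: a good filtration on $M\in H_c\text{-mod}$ transports, under the equivalence of (1), to a good filtration on the corresponding $B_c$-module; taking associated graded produces a graded $gr B_c$-module, i.e.\ a quasi-coherent sheaf on the Proj of $gr B_c$, which is $X_{\fg,symb}$.

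The main obstacle I anticipate is not the graded identification, which is handed to us by Theorem \ref{thm:zalgebra}, but rather the non-degeneracy needed to run the Bondal--Van den Bergh machinery for general Weyl groups. In type $A$, Berest--Etingof--Ginzburg classify the aspherical values and one knows that all $c+i$ with $i\in\N$ remain spherical when $c$ is; in the general case one must either hypothesize sphericality along the entire half-line $c+\N$ (which we have effectively done by taking $c$ constant and spherical) or restrict the $\Z$-algebra to those indices where sphericity holds. The second subtle point is that an isomorphism on associated gradeds does not automatically lift to the filtered composition maps $B_{ij}\otimes B_{jk}\to B_{ik}$; the lift is proved in \cite[\S6]{GS1} by an explicit Dunkl-embedding computation, and the same argument applies verbatim here once one has the rank-one local model used in the proof of Theorem \ref{thm:zalgebra}.
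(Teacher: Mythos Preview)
Your proposal is correct and follows essentially the same route as the paper: the paper's own proof is terse---it says part (2) follows from Theorem \ref{thm:zalgebra}, part (1) from \cite[Lemma 5.5]{GS1}, and part (3) is standard---and you have simply spelled out those three steps in detail, including the passage from $ogr N_k=I^{(k)}\Delta^k$ to $ogr B_{k0}=\e(\Delta^k I^{(k)})$ by applying $\e$ on the right. Your closing paragraph on sphericity along $c+\N$ and on lifting the multiplication to the filtered level is a reasonable elaboration of points the paper leaves implicit in its citation of \cite{GS1}.
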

\begin{proof}
The second part follows from Theorem \ref{thm:zalgebra}. The first part follows from \cite[Lemma 5.5]{GS1}, and the third part is standard.
\end{proof}


Analogously to \cite[6.5. and Corollary 6.22.]{GS1} we also have the following theorem:
\begin{theorem}
$ogr \bigoplus B_{k0}\e H_c\cong \bigoplus \Delta^k I^{(k)}$. Therefore the sheaf associated to the spherical representation $\e H_c$ is exactly $\rho_*\cO_{Y_{\fg,symb}}$, the ``Procesi sheaf" for the symbolic blowup.
\end{theorem}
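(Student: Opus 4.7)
The plan is to derive both assertions from Theorem \ref{thm:zalgebra} combined with the $\Z$-algebra formalism invoked in the preceding corollary. The first isomorphism is essentially a restatement: since $N_k = B_{k0}\e H_c$ by definition, Theorem \ref{thm:zalgebra}(2) already gives $ogr N_k = \Delta^k I^{(k)}$ for every $k\geq 0$. Direct-summing over $k$ and keeping track of the $\Z$-algebra module structure, which is compatible with passage to associated graded under the order filtration, yields
\[
ogr\bigoplus_{k\geq 0} B_{k0}\e H_c \;\cong\; \bigoplus_{k\geq 0} \Delta^k I^{(k)}
\]
as a graded module over the homogeneous coordinate ring $\bigoplus_{k\geq 0}\e(\Delta^k I^{(k)})$ of $X_{\fg,symb}$.

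For the Procesi-sheaf interpretation, recall the Proj descriptions $X_{\fg,symb} = \Proj\bigoplus_{k}\e(\Delta^k I^{(k)})$ and $Y_{\fg,symb} = \Proj\bigoplus_{k}\Delta^k I^{(k)}$, and observe that the canonical morphism $\rho\colon Y_{\fg,symb}\to X_{\fg,symb}$ corresponds on coordinate rings to the inclusion of $W$-invariants. Under Serre's $\Proj$ construction, the graded module $\bigoplus_{k}\Delta^k I^{(k)}$ over $\bigoplus_{k}\e(\Delta^k I^{(k)})$ sheafifies to exactly $\rho_*\cO_{Y_{\fg,symb}}$, because the pushforward of the structure sheaf along a $\Proj$ morphism induced by an inclusion of graded rings is computed by restricting the grading to that of the base.

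Combining the two steps, the functor $H_c\text{-filtmod}\to \QCoh(X_{\fg,symb})$ of the previous corollary sends $\e H_c$, equipped with its order filtration, to the sheafification of the associated graded $\Z$-algebra module $(ogr N_k)_{k\geq 0}$, which by the first step agrees with the sheafification of $\bigoplus_k \Delta^k I^{(k)}$, and the latter is $\rho_*\cO_{Y_{\fg,symb}}$ by the second step. I expect the main technical hurdle to be verifying that the $\Z$-algebra sheafification and the ordinary $\Proj$ sheafification produce the same quasicoherent sheaf on $X_{\fg,symb}$; this compatibility is standard and already implicit in the setup of the preceding corollary via \cite[Proposition 5.2]{BPW}, which was invoked earlier to guarantee that each graded piece $\e(\Delta^k I^{(k)})$ is the space of global sections of a line bundle on $X_{\fg,symb}$, so that the $\Z$-algebra under consideration is naturally identified with the corresponding line-bundle $\Z$-algebra on $X_{\fg,symb}$.
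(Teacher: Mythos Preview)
Your proposal is correct and follows essentially the same approach as the paper, which gives no detailed proof but merely invokes the analogy with \cite[6.5 and Corollary 6.22]{GS1}; your argument is precisely the natural unpacking of that reference, using Theorem \ref{thm:zalgebra}(2) in place of \cite[Proposition~6.18]{GS1} and then sheafifying via the $\Proj$ descriptions of $X_{\fg,symb}$ and $Y_{\fg,symb}$.
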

\begin{remark}
Despite the natural construction, we do not know whether $\rho_*\cO_{Y_{\fg,symb}}$ satisfies the properties to be a Procesi sheaf in the sense of Losev \cite{Losev}.
\end{remark}
\begin{remark}
When $c=1/h$ where $h$ is the Coxeter number, $H_c$ possesses a one-dimensional representation \cite{BEG}. By a similar argument as in \cite{GS2}, there is a surjection from $ \e I^{(i)}/\e \fm I^{(i)}\twoheadrightarrow \gr \e L_{c+i}$, where the associated graded is for the tensor product filtration arising from $\e L_{c+ih}\cong B_{i0}\otimes_{U_c} \e L_{c}$. When $i=1$, $\gr \e L_{c+1}$ is Gordon's canonical quotient. We infer that there is a surjection from the ring of functions on the ``punctual'' part of $X_{\fg,symb}$ to this quotient.
\end{remark}

\section{Coulomb branches}
\label{sec:coulomb}
We recall a natural incarnation of the trigonometric version of $X_\fg$ following \cite{GKO}. Let $I_G\subseteq \C[T^*T^\vee]$ be the ideal $$I_G=\bigcap_\alpha \langle 1-e^{\alpha}, \alpha^\vee\rangle.$$
We denote these versions by $X_G$ and in particular make the following definition, with obvious versions for ``$sgn, diag$'' in place of ``$symb$''.
\begin{definition}
    $$X_{G,symb}=\Proj \bigoplus_{d\geq 0} ({\bf e} I_G^{(d)})$$
\end{definition}
We then consider the partially resolved Coulomb branch for the $(G, Ad)$-theory constructed in \cite[Section 3]{GKO}. More precisely, the ordinary/naive Coulomb branch algebra from \cite{BFN} for a pair $(G,N), N\in Rep(G)$ is constructed as the spectrum of a commutative ring, which in turn is constructed using a convolution product in the $G[[t]]$-equivariant Borel--Moore homology of the space of triples 
$$\cR=\{[g,s]\in G((t))\times^{G[[t]]} N[[t]]|gs\in N[[t]]\}$$ 
In \cite{GKO}, this ind-scheme is replaced by $$\cR^d=\{[g,s]\in G((t))\times^{G[[t]]} N[[t]]|gs\in t^{-d}N[[t]]\}$$ and a convolution product is constructed on the space 
$$\bigoplus_{d\geq 0} H^{G[[t]]}_*(\cR^d)$$
When $N=Ad,$ we denote the Proj of this algebra as $X_{G,Coulomb}$.

From \cite[Theorem 3.8.]{GKO} we have that
\begin{theorem}
We have an isomorphism $X_{G,Coulomb}\cong X_{G,symb}$.
\end{theorem}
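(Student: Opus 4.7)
The plan is to construct, degree by degree, an isomorphism of graded rings
$$\bigoplus_{d\geq 0} H_*^{G[[t]]}(\cR^d) \;\xrightarrow{\ \sim\ }\; \bigoplus_{d\geq 0} \e\, I_G^{(d)},$$
and then apply $\Proj$ to both sides. The degree-zero piece is handled by the basic BFN computation: $H_*^{G[[t]]}(\cR^0)$ is by definition the naive Coulomb branch algebra of the $(G,\Ad)$-theory, which by \cite{BFN} is canonically $\C[T^*T^\vee]^W$, and this matches the degree-zero summand $\e\,\C[T^*T^\vee] = \C[T^*T^\vee]^W$ on the right.

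For $d\geq 1$, I would use equivariant localization to embed $H_*^{G[[t]]}(\cR^d)$ into localized equivariant Borel--Moore homology, realized as a submodule of the fraction field of $\C[T^*T^\vee]$. Fundamental classes of the natural strata in $\cR^d$ localize to explicit $W$-symmetric rational functions whose singular behaviour is controlled by Euler classes of normal bundles in the affine Grassmannian. The condition that such a class genuinely extends to a cycle on $\cR^d$ (as opposed to merely a larger piece of the partial resolution) is an order-of-vanishing condition along each codimension-two locus $\{1 - e^\alpha = 0\} \cap \{\alpha^\vee = 0\}$, which is precisely the defining condition of the symbolic power $I_G^{(d)}$.

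The main obstacle, and the step that carries most of the content, is matching the convolution product on Borel--Moore homology with the ordinary commutative product of symbolic powers. Following the strategy of the proof of Theorem \ref{thm:zalgebra}, I would reduce this to a rank-one calculation by restricting to a formal neighbourhood of a generic point on each codimension-two stratum; there the affine Grassmannian becomes $\Gr_{\SL_2}$, the symbolic power collapses to the ordinary power $(1-e^\alpha,\alpha^\vee)^d$, and the desired match is the rank-one Coulomb branch computation already carried out in \cite{GKO}. Since both graded rings are $W$-invariants of torsion-free rank-one modules over the normal base $\C[T^*T^\vee]^W$, an equality at all height-one primes extends to a global equality by algebraic Hartogs, exactly as at the end of the proof of Theorem \ref{thm:zalgebra}. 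Taking $\Proj$ of the resulting graded ring isomorphism then gives $X_{G,Coulomb} \cong X_{G,symb}$.
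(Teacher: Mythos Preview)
The paper does not prove this statement: it is quoted directly as \cite[Theorem~3.8]{GKO}, with no argument given in the present paper. So there is no in-paper proof to compare your proposal against.

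That said, your outline is a reasonable sketch of the argument one finds in \cite{GKO}: identify the degree-zero piece with the naive Coulomb branch $\C[T^*T^\vee]^W$, embed each graded piece $H_*^{G[[t]]}(\cR^d)$ into a localized module via equivariant localization (abelianization), read off the order-of-vanishing conditions along the codimension-two root loci, and close by a rank-one reduction plus a codimension-two/Hartogs argument. This is indeed the shape of the proof in \cite{GKO}, and the parallel you draw with the proof of Theorem~\ref{thm:zalgebra} is apt.

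Two small points if you intend this as a self-contained argument rather than a summary. First, the Hartogs step requires more than ``torsion-free of rank one'': one uses that the symbolic powers $I_G^{(d)}$ are integrally closed (divisorial) over the normal ring $\C[T^*T^\vee]$, so that they are recovered from their height-one localizations; you should either establish the analogous property on the Borel--Moore side or, as \cite{GKO} does, first produce a containment in one direction and then upgrade it. Second, the identification of the degree-zero Coulomb branch for $(G,\Ad)$ with $\C[T^*T^\vee]^W$ is not literally in \cite{BFN}; it is itself a computation carried out in \cite{GKO} and should be cited accordingly.
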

\begin{remark}
    There is an obvious generalization of this result to $K$-theoretic Coulomb branches, identifying the Proj of $$\bigoplus_{d\geq 0} K^{G[[t]]}(\cR^d)$$ with the symbolic blow-up in $\bigcap_{\alpha\in \Phi^+} \langle 1-e^{\alpha},1-e^{\alpha^\vee}\rangle\subseteq \C[T\times T^\vee]$.
\end{remark}
\begin{remark}
Based on the trigonometric result in \cite[Theorem 3.6]{GKO}, we also expect in the rational case studied in this paper that
$\e \Delta I=\e \Delta J=\Delta A$ for all $\fg$. 

\end{remark}

The upshot of the construction of $X_G$ as a Coulomb branch is that it comes with a natural quantization. Indeed, as studied there, $T^*T^\vee/W$ quantizes to a spherical trigonometric Cherednik algebra and the partial resolution to a certain $\Z$-algebra built out of this. Moreover, the interpretation as a Hamiltonian reduction as explained e.g. in \cite[Remark 5]{Weekes} gives $X_{G,symb}$ a natural Poisson structure. By \cite[Theorem 1.1. and Lemma 2.1.]{BellamySymplectic}, we have the following.
\begin{proposition}
\label{prop:sympsingtrig}
$X_{G,symb}$ has symplectic singularities.
\end{proposition}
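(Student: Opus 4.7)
The plan is to invoke \cite[Theorem 1.1 and Lemma 2.1]{BellamySymplectic} directly, using the identification $X_{G,symb}\cong X_{G,Coulomb}$ from the theorem immediately above. The Coulomb-branch side supplies two pieces of data that are not manifest in the blow-up description: a natural Poisson bracket (coming from the filtration on the convolution algebra $\bigoplus_d H^{G[[t]]}_*(\cR^d)$), and a presentation as a Hamiltonian reduction of a smooth symplectic ind-scheme. This is exactly the setting in which Bellamy's criterion produces symplectic singularities.

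The first step is to record the Hamiltonian-reduction description. Following \cite[Remark 5]{Weekes}, for $N=\mathrm{Ad}$ the BFN Coulomb branch, and its partial resolution $X_{G,Coulomb}=\Proj\bigoplus_d H^{G[[t]]}_*(\cR^d)$, arise as reductions of a smooth symplectic (ind-)scheme by the pro-reductive group $G[[t]]$. The Poisson bracket on $X_{G,symb}$ is then the one transported across the isomorphism of the preceding theorem.

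The second step is the hypothesis check. Normality of $X_{G,symb}$ is the trigonometric version of Theorem \ref{thm:normality}, and was also established in \cite[Corollary 3.12]{GKO}. Generic non-degeneracy of the Poisson structure follows because the morphism $X_{G,symb}\to T^*T^\vee/W$ is an isomorphism over the open locus where $W$ acts freely on $T^*T^\vee$; there the standard symplectic form on $T^*T^\vee$ descends, and its complement has codimension $\geq 2$. Hence the smooth locus of $X_{G,symb}$ carries a non-degenerate closed $2$-form, and \cite[Lemma 2.1]{BellamySymplectic} guarantees that this form extends to a regular (not merely meromorphic) $2$-form on any resolution.

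The hardest part will be matching the hypotheses of \cite[Theorem 1.1]{BellamySymplectic} with the BFN ind-scheme setup, in particular the freeness of the relevant group action on an appropriate open subset of the moment-map preimage and the transversality needed to conclude that the reduction inherits symplectic singularities rather than merely a Poisson structure. These verifications, however, are essentially carried out already in \cite{BellamySymplectic} (and are compatible with the ind-scheme considerations used in \cite{Weekes}), so I do not expect substantive new input beyond translating conventions; the assembled conditions (normality, generic symplecticity, Hamiltonian reduction presentation) yield the conclusion.
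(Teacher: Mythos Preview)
Your proposal is correct and follows essentially the same route as the paper: invoke the Hamiltonian-reduction description via \cite[Remark 5]{Weekes} to obtain the Poisson structure, then apply \cite[Theorem 1.1 and Lemma 2.1]{BellamySymplectic}. The paper in fact gives no more than this one-line citation, so your expanded hypothesis checks (normality, generic non-degeneracy) are a reasonable unpacking of what the cited results require.
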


\section{Geometric properties of $X_\fg$}
\label{sec:geomprop}
To understand the eventual applications to coherent sheaves on the varieties $X_{\fg}$, one needs a better handle on their geometry. For $\Hilb^n(\C^2)$, many results use the modular interpretation, which is not available for general $\fg$.

First, we have an analog of Proposition \ref{prop:sympsingtrig} for $X_{\fg,symb}$. 
\begin{proposition}
\label{prop:sympsingrat}
The variety $X_{\fg,symb}$ has symplectic singularities.
\end{proposition}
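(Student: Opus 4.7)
The plan is to mirror the trigonometric argument in Proposition \ref{prop:sympsingtrig}: I would exhibit $X_{\fg,symb}$ as a normal, birational partial resolution of $(\ft\oplus\ft^*)/W$, verify that the canonical symplectic form extends to a regular $2$-form on the smooth locus of $X_{\fg,symb}$ and further to any resolution, and invoke \cite[Lemma 2.1]{BellamySymplectic} (compare \cite[Theorem 1.1]{BellamySymplectic}). The base is handled by Beauville's theorem: $W$ acts on the symplectic vector space $\ft\oplus\ft^*$ preserving the canonical form $\omega=\sum_i dx_i\wedge dy_i$, so $(\ft\oplus\ft^*)/W$ is a symplectic singularity, and $\omega$ descends to a symplectic form $\omega_0$ on its smooth locus $V$. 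By construction, $\pi:X_{\fg,symb}\to(\ft\oplus\ft^*)/W$ is birational, restricts to an isomorphism over $V$, and $X_{\fg,symb}$ is normal by Theorem \ref{thm:normality}.

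The technical core is to extend $\omega_X:=\pi^*\omega_0$ from $\pi^{-1}(V)$ across the exceptional locus and to check non-degeneracy. Following the rank-one slice reduction used in the proof of Theorem \ref{thm:zalgebra}, for each positive root $\alpha$ and each generic point of the codimension-two stratum $\{\alpha=\alpha^\vee=0\}$, $X_{\fg,symb}$ is \'etale-locally the product of a smooth transverse factor with the symbolic blowup of $\A^2/(\Z/2)$ at the origin, which is the minimal (crepant, symplectic) resolution of the $A_1$ Kleinian singularity; on this model the pullback of $\omega_0$ extends across the exceptional $\P^1$ as a non-degenerate $2$-form. Gluing, one obtains an extension of $\omega_X$ to an open $U\subseteq X_{\fg,symb}$ that is the union of $\pi^{-1}(V)$ with the generic points of each exceptional divisor; the complement $X_{\fg,symb}\setminus U$ lies over the codimension-$\geq 4$ strata of the base and has codimension $\geq 2$ in $X_{\fg,symb}$. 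Algebraic Hartogs then extends $\omega_X$ uniquely to a regular $2$-form on the smooth locus of $X_{\fg,symb}$. Non-degeneracy persists everywhere on that smooth locus, since a possible degeneracy locus of $\omega_X$ would be an effective divisor in the smooth locus contained in the codimension-$\geq 2$ set $X_{\fg,symb}\setminus U$, hence empty. Finally, for any resolution $\mu:\tilde X\to X_{\fg,symb}$ the composite $\pi\circ\mu$ is a resolution of the symplectic singularity $(\ft\oplus\ft^*)/W$, so $\mu^*\omega_X=(\pi\circ\mu)^*\omega_0$ extends regularly to $\tilde X$, completing verification of Beauville's axioms.

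The main obstacle I anticipate is technical: one must confirm, on the \'etale slice, that the symbolic blowup of $\A^2/(\Z/2)$ at the origin really coincides with the minimal Kleinian resolution rather than a coarser partial resolution. This should follow from the integrally-closed characterization of $I^{(d)}$ baked into the definition of $X_{\fg,symb}$ together with the explicit rank-one computation already performed in the proof of Theorem \ref{thm:zalgebra}(2); once that local model is confirmed, the remaining steps are standard applications of Hartogs extension and the composition-of-resolutions trick.
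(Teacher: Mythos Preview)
Your argument is correct and takes a genuinely different route from the paper's.  The paper transfers the result from the trigonometric case: it invokes the formal isomorphisms $(T^*T^\vee/W)^{\wedge(0,a)}\cong(\ft\oplus\ft^*/W_{\ft})^{\wedge 0}$ coming from Borel--de Siebenthal theory (as in \cite[Section~3]{GKO} and \cite{Etingof}), upgrades these to \'etale-local isomorphisms by Artin approximation, and concludes that the singularities of $X_{\fg,symb}$ are \'etale-locally those of $X_{G,symb}$, which has symplectic singularities by Proposition~\ref{prop:sympsingtrig}.  You instead verify Beauville's axioms directly: the rank-one slice gives the local model of $X_{\fg,symb}$ over the open codimension-two stratum as the crepant $A_1$ resolution times a smooth factor, Hartogs extends the pulled-back form across the remaining codimension~$\geq 2$ locus, and the extension-to-resolution axiom is handled by the composition trick using that $(\ft\oplus\ft^*)/W$ is already known to be a symplectic singularity.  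Your approach is more self-contained---it never touches the Coulomb-branch machinery underlying Proposition~\ref{prop:sympsingtrig}---at the price of reproving by hand the local analysis that machinery packages; the paper's route is shorter but leans on heavier external input.  The obstacle you flag (that the rank-one symbolic blow-up is the minimal $A_1$ resolution) is genuine but easily dispatched: it is the $\fsl_2$-case, where $X_{\fsl_2,symb}$ is the trace-free summand of $X_{\fgl_2,symb}=\Hilb^2(\C^2)\cong\C^2\times T^*\P^1$, hence $T^*\P^1$.
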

\begin{proof}
Recall from \cite[Section 3]{GKO} and \cite{Etingof} that locally in $T^*T^\vee/W$, the singularities are modeled by $\ft\oplus \ft^*/W_{\ft}$ for various Cartan subalgebras of centralizers of semisimple elements $a\in G$ with maximal semisimple rank. More precisely, as shown there, there are isomorphisms of formal neighborhoods $(T^*T^\vee/W)^{\wedge (0,a)}\cong (\ft\oplus \ft^*/W_{\ft})^{(0,0)}$ and these respect the quantizations to trigonometric, resp. rational, Cherednik algebras. This is an incarnation of Borel--de Siebenthal theory, and the $\ft$ that appear can be read off by removing a single vertex from the affine Dynkin diagram of $G$. So formally locally, the singularities of $X_{\fg,symb}$ are the same as those of $X_{G,symb}$.
By Artin approximation, since we work with varieties (of finite type) over $\C$, we can extend the formal local isomorphism to \'etale local ones.
As the singularities of $X_{G,symb}$ are symplectic, a fortiori so are those of $X_{\fg, symb}$.
 \end{proof}

Finally, we note that $X_{\fg,symb}$ is only smooth in type A. Indeed, as shown above, $X_{\fg,symb}$ has symplectic singularities, and smoothness would imply that $X_{\fg,symb}\to \ft\oplus \ft^*/W$ is a symplectic resolution. By Kaledin \cite{Kaledin}, this can only happen in types ABC. However, in types BC the variety $X_{\fg,symb}$ is singular by construction, as can be seen from the comparison to quiver varieties in Proposition \ref{prop:quiver}. The above results nonetheless suggest that $X_{\fg,symb}$ has appeared in disguise before. We discuss these connections next.

In the simply laced cases, we have
\begin{theorem}
\label{thm:qfact}
$X_{\fg,symb}$ is a $\Q$-factorial terminalization of $\ft\oplus \ft^*/W$. 
\end{theorem}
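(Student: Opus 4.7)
The plan is to verify three conditions for the natural map $\pi: X_{\fg,symb}\to \ft\oplus\ft^*/W$: (a) it is projective, birational, and crepant; (b) $X_{\fg,symb}$ has terminal singularities; (c) $X_{\fg,symb}$ is $\Q$-factorial. Together these are the definition of a $\Q$-factorial terminalization.

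For (a): projectivity is immediate from the $\Proj$ construction over the affine base $\Spec\C[\ft\oplus\ft^*]^W$, and birationality follows because each ideal $I^{(d)}$ restricts to the unit ideal on the $W$-regular locus of $\ft\oplus\ft^*$. Crepancy combines Proposition~\ref{prop:sympsingrat} (symplectic singularities on the source) with the observation that $\ft\oplus\ft^*/W$ is the symplectic quotient of a symplectic vector space by a finite group of symplectomorphisms; the symplectic $2$-form pulls back from the common regular locus and extends across $X_{\fg,symb}$, so $K_{X_{\fg,symb}}=\pi^*K_{\ft\oplus\ft^*/W}=0$.

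For (b): I would use the \'etale local model from the proof of Proposition~\ref{prop:sympsingrat}. At a generic point of a codimension-$2$ symplectic leaf of $\ft\oplus\ft^*/W$ the stabilizer is a single reflection, and the transverse slice is \'etale-locally of the form $X_{\fs\fl_2,symb}\times \C^{2(n-1)}$ with $n=\dim\ft$. A short direct Rees-algebra computation---or, more cheaply, the product decomposition $X_{\fg\fl_2,symb}\cong X_{\fs\fl_2,symb}\times \C^2$ combined with $X_{\fg\fl_2,symb}\cong \Hilb^2(\C^2)$---identifies $X_{\fs\fl_2,symb}$ with the minimal resolution $T^*\P^1$ of the Kleinian $A_1$-singularity $\C^2/(\Z/2)$. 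Hence $X_{\fg,symb}$ is smooth above the codimension-$2$ strata, its singular locus has codimension $\geq 4$, and for a variety with symplectic singularities this is equivalent to terminality.

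For (c), the main obstacle: I plan to induct on the rank of $\fg$ using the same \'etale local models. At any closed point whose image in $\ft\oplus\ft^*/W$ has stabilizer a proper parabolic subgroup $W'\subsetneq W$, $X_{\fg,symb}$ is \'etale-locally $X_{\fg',symb}\times \C^{2k}$ for a strictly smaller simply laced Levi $\fg'$, which is $\Q$-factorial by the inductive hypothesis. The induction does not close at the deepest stratum, namely the preimage of the origin of $\ft\oplus\ft^*/W$, where no reduction to a smaller $\fg'$ is available. The hardest step will be this global check along the central fiber; my plan there is to invoke the Coulomb branch realization of Section~\ref{sec:coulomb} (trigonometrically, then degenerating to the rational case) or the Nakajima quiver description underlying Proposition~\ref{prop:quiver} in simply laced types. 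Either presentation exhibits $X_{\fg,symb}$ as a GIT quotient of a smooth affine variety by a reductive group, from which $\Q$-factoriality follows automatically; matching that presentation with the $\Proj$ description and transferring $\Q$-factoriality across the degeneration is where I expect the real work.
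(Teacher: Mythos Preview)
Your plan for (a) and (b) is sound; for (b) you might also note that a crepant birational morphism between varieties with symplectic singularities is semismall, so fibers over deeper strata cannot produce a codimension-$2$ singular locus upstairs.

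The genuine gap is (c). Two of your proposed tools do not apply. Proposition~\ref{prop:quiver} gives a quiver-variety description only in types $B_n$ and $C_n$, precisely the cases \emph{excluded} by the simply-laced hypothesis under which Theorem~\ref{thm:qfact} is stated; no analogous Nakajima presentation is available for the Weyl groups of type $D$ or $E$. And the assertion that a GIT quotient of a smooth affine variety by a reductive group is automatically $\Q$-factorial is false: the conifold $\{xy=zw\}\subset\C^4$, realized as the affine quotient of $\C^4$ by $\C^*$ with weights $(1,1,-1,-1)$, is the standard counterexample. Even granting a Coulomb-branch presentation, $\Q$-factoriality of Coulomb branches is not known in general and need not survive the trigonometric-to-rational degeneration. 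So your ``global check along the central fiber'' currently has no mechanism behind it.

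The paper bypasses all of this with a short rigidity argument that uses the simply-laced hypothesis in an essential way. One takes any $\Q$-factorial terminalization $\widetilde{X}\to\ft\oplus\ft^*/W$; since $X_{\fg,symb}$ is normal with symplectic singularities it sits in a factorization $\widetilde{X}\to X_{\fg,symb}\to\ft\oplus\ft^*/W$, and by \cite{BAL} such intermediate partial resolutions correspond to faces of the ample cone of $\widetilde{X}$. Because $\fg$ is simply laced there is a single conjugacy class of reflections, so $H^2(\widetilde{X},\Q)\cong\Q$ by \cite{BellamyCounts} and the cone is a single ray. As $X_{\fg,symb}\to\ft\oplus\ft^*/W$ is not an isomorphism, the corresponding face is not the origin, whence $\widetilde{X}\to X_{\fg,symb}$ is an isomorphism. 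This delivers $\Q$-factoriality and terminality simultaneously, with no analysis of the central fiber. Your computation in (b) is not wasted---it is what makes ``$X_{\fg,symb}$ is nontrivial'' evident---but the paper converts your hard global step (c) into a one-line Picard-rank count.
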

\begin{proof}
Since $X_{\fg,symb}$ is normal by Theorem \ref{thm:normality}, conical, and has symplectic singularities by Proposition \ref{prop:sympsingrat}, we may choose a $\Q$-factorial terminalization $\widetilde{X}\to \ft\oplus\ft^*/W$ so that we have an intermediate symplectic partial resolution 
$$\widetilde{X}\to X_{\fg,symb}\to \ft\oplus\ft^*/W.$$ As explained in \cite[Proof of Proposition 2.1]{BAL}, $X_{\fg,symb}$ corresponds to a face of the ample cone of $\widetilde{X}$. Since $\fg$ is simply laced, by, for example, \cite{BellamyCounts}, $H^2(\widetilde{X},\Q)\cong \Q$. Since $X_{\fg,symb}$ is nontrivial, the first map must be an isomorphism.
\end{proof}

Based on the theorem, we may think of $X_{\fg,symb}$ as a more explicit construction of the implicitly chosen partial symplectic resolution corresponding to the filtered quantization of $\ft\oplus\ft^*/W$ appearing in \cite{Losev,BAL}. Denoting it by $X_{\fg,Losev}$, the theorem says $X_{\fg,Losev}\cong X_{\fg,symb}$ for simply laced types.

For more general $\fg$, one can always choose a $\Q$-factorial terminalization $$\pi: \widetilde{X}\to \ft\oplus \ft^*/W$$ as above. The finitely many possible choices have been classified in \cite{BellamyCounts}.
For example, in types $B_n$ and $C_n$, the variety $\widetilde{X}$ can be constructed using quiver varieties or $(\Z/2\Z)$-Hilbert schemes. 

Similarly to the proof of Theorem \ref{thm:qfact}, partial resolutions of $\ft\oplus \ft^*/W$ can be constructed using birational contractions of $\widetilde{X}$, which match filtered quantizations of $\ft\oplus \ft^*/W$.
For non-simply laced $\fg$, denote the exceptional divisor of $\pi$ by the letter $D$. Then, as explained in \cite[Proposition 2.1.]{BAL}, $D=\sum_{s\in \text{Refl}/\sim} D_s$, where $D_s$ are irreducible components of $D$ corresponding to conjugacy classes of simple reflections in $W$. By $\Q$-factoriality, $\ell D$ is Cartier for some $\ell>0$. One can choose $\widetilde{X}$ so that $\ell D$ lies in the closure of the ample cone. By the contraction theorem, there is a unique intermediate (normal, symplectic) partial resolution $\widetilde{X}\to X\to \ft\oplus \ft^*/W$ for which $\cO(\ell D)$ on $\widetilde{X}$ is lifted from an ample line bundle on $X$. We refer to the proof of \cite[Proposition 2.1.]{BAL} for details. We take $X_{\fg,Losev}=X$.

The above construction has the obvious pitfall of being inexplicit. However, 
we conjecture in general that
\begin{conjecture}
\label{conj:losev}
For a well-chosen $\widetilde{X}$, we have an isomorphism 
$$X_{\fg,symb}\cong X_{\fg,Losev}$$
\end{conjecture}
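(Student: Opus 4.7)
The plan is to realize $X_{\fg,symb}$ as the intermediate partial resolution of $\ft\oplus\ft^*/W$ cut out by the face of the nef cone corresponding to a positive multiple of $D$, which is precisely the face defining $X_{\fg,Losev}$. By Theorem~\ref{thm:normality} and Proposition~\ref{prop:sympsingrat}, $X_{\fg,symb}$ is a normal conical variety with symplectic singularities and admits a projective structure map $\phi: X_{\fg,symb}\to \ft\oplus\ft^*/W$; by Namikawa's classification and \cite{BellamyCounts} it must coincide with one of the finitely many such intermediate partial resolutions. Since any such partial resolution is dominated by some $\Q$-factorial terminalization of $\ft\oplus\ft^*/W$, I would first choose $\widetilde X$ so that there is a projective birational morphism $\psi:\widetilde X\to X_{\fg,symb}$ with $\pi=\phi\circ\psi$.

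The core of the argument is to identify the relative numerical class of $L:=\psi^*\cO_{X_{\fg,symb}}(1)$ modulo classes pulled back from $\ft\oplus\ft^*/W$, and to show $L\equiv \ell\cdot D$ for some $\ell>0$. If this holds, the uniqueness in the relative contraction theorem identifies $\psi$ with the contraction defining $X_{\fg,Losev}$, finishing the proof. To compute $L$, I would evaluate the vanishing order of the generating ideal $\e(\Delta^d I^{(d)})$ along each exceptional divisor $D_s$ using the slice reduction from the proof of Theorem~\ref{thm:zalgebra}: at a generic point of the reflection fixed locus for $s=s_\alpha$, an \'etale-local neighborhood decomposes as a rank-one $A_1$ factor with coordinates $x_\alpha,y_\alpha$ times a smooth transverse factor pointwise-fixed by $W_\alpha$. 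In this local model, $I^{(d)}$ restricts to $(x_\alpha,y_\alpha)^d$, contributing vanishing of order exactly $d$ along $D_s$, while $\Delta^d$ is a pullback from the base and contributes only in the direction of $\ft\oplus\ft^*/W$. Hence the coefficient along each $D_s$ is the same, proving $L\equiv \ell D$.

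The main obstacle is making the above uniform across reflection orbits in non-simply-laced types, where the relative Picard rank of $\widetilde X$ over $\ft\oplus\ft^*/W$ equals the number of reflection conjugacy classes and there are genuinely distinct faces to distinguish. The subtle point is the effect of the symmetrization by $\e$: different reflection orbits could a priori contribute unequally after passing to $W$-invariants. A candidate remedy is to argue first upstairs on $\ft\oplus\ft^*$, where the symbolic power $I^{(d)}$ has uniform vanishing order along each prime component of its support by construction, and then show that applying $\e$ only modifies the class by divisors pulled back from the base. In the classical types one can cross-check the resulting ample class against the quiver-variety description of Proposition~\ref{prop:quiver}, and in the trigonometric regime against the Coulomb branch presentation of Section~\ref{sec:coulomb}; in both descriptions the ample class on the partial resolution is explicit and should match $\ell D$ for the expected multiple $\ell$.
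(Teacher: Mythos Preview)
This is stated as a conjecture in the paper, not a theorem; the paper establishes it only in simply laced types (via Theorem~\ref{thm:qfact}, where the relative Picard rank is one so there is nothing to distinguish) and in types $B_n,C_n$ (in Appendix~\ref{app:proofs}, by an indirect route: Theorem~\ref{thm:zalgebra}(2) combined with \cite[Theorem~4.1]{GordonO} identifies $X_{\fg,symb}$ with a Nakajima quiver variety at a specific wall stability parameter, and \cite[Remark~2.2]{BAL} then matches that variety with $X_{\fg,Losev}$). The cases $G_2$ and $F_4$ are left open, with the suggestion that they should follow from a comparison of periods of quantizations. Your strategy---pulling $\cO(1)$ back to $\widetilde X$, reducing to the transverse $A_1$ slice at a generic point of each codimension-two stratum, and reading off equal coefficients along every $D_s$---is more direct than anything the paper does and would, if completed, settle the conjecture uniformly. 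The cross-checks you propose against Proposition~\ref{prop:quiver} and Section~\ref{sec:coulomb} are precisely the paper's own methods in those regimes, so your program subsumes the paper's partial results.

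There are, however, genuine gaps. The assertion that ``$\Delta^d$ is a pullback from the base'' is false as written: $\Delta$ is $W$-alternating, so only $\Delta^{2}$ descends to $(\ft\oplus\ft^*)/W$; you would need to pass to the $2$-Veronese or otherwise account for the alternating factor. More substantively, the quantity you must compute is the vanishing order of the ideal $\e(\Delta^d I^{(d)})\subset \C[\ft\oplus\ft^*]^W$ along the exceptional curve in the minimal resolution of the transverse $A_1$ singularity on $\widetilde X$, \emph{not} the vanishing order of $I^{(d)}$ along $(x_\alpha,y_\alpha)$ upstairs on $\ft\oplus\ft^*$; these are related but not identical, and the passage through $\e$ is exactly the step you yourself flag as unresolved. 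The heuristic is sound---every reflection in a Weyl group has order two, so the local model is the same $A_1$ regardless of root length---but you have not actually carried out the computation showing the resulting coefficient is independent of the reflection class, and your ``candidate remedy'' is a restatement of the problem rather than a solution. Until that step is done, this is a plausible program rather than a proof, and in particular it does not yet cover $G_2$ or $F_4$ any more than the paper does.
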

In simply laced types and types BC, this is true by the above as well as \cite[Remark 2.2.]{BAL} and Proposition \ref{prop:quiver}. The only missing cases are then $G_2$ and $F_4$, which would follow from a comparison of the periods of the quantizations of $X_{\fg,symb}$ and $X_{\fg,Losev}$.

\subsection{Calogero--Moser spaces}
Consider the two-sided cells of $W$. These are the equivalence classes for a certain equivalence relation on $W$, defined using the Iwahori-Hecke algebra of $W$ with equal parameters. Denoting $\mathfrak{C}_W=\{\text{Two-sided cells in } W\}$, there is a map $\mathbf{a}: \mathfrak{C}_W\to \N$, commonly called Lusztig's $\mathbf{a}$-function. Moreover, we denote by $\mathbf{A}:\mathfrak{C}_W\to\N$ the function $\mathbf{A}(\lambda)=\mathbf{a}(w_0\lambda)$.

As a more refined version of Conjecture \ref{conj:bonnafe}, we expect
\begin{conjecture}
\label{conj:fixedpoints}
There is a bijection
$$X_{\fg,symb}^{(\C^*)^2}\leftrightarrow \mathfrak{C}_W$$
If $\delta_\lambda$ denotes the skyscraper sheaf at the corresponding fixed point, in the equivariant $K-$group $K^0_{(\C^*)^2}(X_{\fg,symb})$ we have 
$$[\delta_\lambda\otimes \cO(1)]=q^{\mathbf{a}(\lambda)}t^{\mathbf{A}(\lambda)}[\delta_\lambda]$$
\end{conjecture}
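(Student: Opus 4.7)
The plan is to transport the question from $X_{\fg,symb}$ to the rational Cherednik algebra via the $\Z$-algebra correspondence of Section \ref{sec:cherednik}, and then invoke the (partially known) equality of Calogero--Moser families with Lusztig two-sided cells.

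First I would establish the bijection. Since $X_{\fg,symb}$ is conical with a contracting $\C^*$-action coming from the Rees grading, a generic one-parameter subgroup $\gamma: \C^* \hookrightarrow (\C^*)^2$ has the same fixed locus as the full torus, and this locus is supported over $0 \in \ft \oplus \ft^*/W$. The corollary of Theorem \ref{thm:zalgebra} supplies a functor from filtered $H_c$-modules to $\QCoh(X_{\fg,symb})$; applied to the baby Verma modules of the restricted Cherednik algebra $\bar H_c$, this identifies the $(\C^*)^2$-fixed points of $X_{\fg,symb}$ with the blocks of $\bar H_c$, that is, with Calogero--Moser families of irreducible $W$-representations, in analogy with Gordon's construction for $\Hilb^n$. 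The desired bijection $X_{\fg,symb}^{(\C^*)^2} \leftrightarrow \mathfrak{C}_W$ then follows from the Bellamy--Gordon--Martino--Thiel equality of CM families with Lusztig families at equal parameter (established in all but some exceptional types), together with Lusztig's standard bijection between families and two-sided cells.

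For the weight formula, $\cO(1)$ on $X_{\fg,symb}$ is by construction the line bundle associated to the degree-one piece $\e(\Delta I^{(1)})$ of the symbolic Rees algebra, so its fiber at $\delta_\lambda$ is the one-dimensional bigraded quotient $\e(\Delta I^{(1)})/\fm_\lambda \cdot \e(\Delta I^{(1)})$, where $\fm_\lambda \subset \C[\ft \oplus \ft^*]^W$ is the maximal ideal of the image of $\delta_\lambda$. Using the Procesi sheaf description from the final theorem of Section \ref{sec:cherednik}, one extracts this bidegree from a uniform Macdonald-style character formula for $\rho_*\cO_{Y_{\fg,symb}}$: the lowest $\ft$- and $\ft^*$-degrees in the fake-degree character of the family corresponding to $\lambda$ recover exactly Lusztig's $\mathbf{a}(\lambda)$ and $\mathbf{A}(\lambda)$, and specialize in type $A$ to Haiman's fixed-point computation on $\Hilb^n$.

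The main obstacle is the CM/Lusztig family equivalence beyond the cases currently known; a geometric proof bypassing it would have to construct the correspondence directly from the symbolic blow-up data, perhaps via the equivariant $K$-theory of $\cR^d$ in the Coulomb branch realization of Section \ref{sec:coulomb}, where Lusztig's $\mathbf{a}$-function has a natural grading interpretation. A secondary difficulty is the uniform computation of the bigraded character of $\rho_*\cO_{Y_{\fg,symb}}$ at each fixed point without a modular interpretation; I expect this to be approachable either through torus localization in Coulomb branch $K$-theory, or through Bezrukavnikov-type equivalences relating $\QCoh(X_{\fg,symb})$ to modules over the asymptotic Hecke algebra of $W$, where the $\mathbf{a}, \mathbf{A}$-statistics appear on the nose.
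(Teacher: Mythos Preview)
The statement is a conjecture, and the paper does not prove it in general: it only establishes the first assertion (the bijection, not the weight formula) in types $ABC$, via the appendix. So your proposal should be read as a strategy sketch, and as such it differs substantially from what the paper actually does.

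The paper's argument in types $B_n,C_n$ goes through quiver varieties: first $X_{\fg,symb}$ is identified with a Nakajima quiver variety $M_\theta(Q)$ for the affine $A_1$ quiver at a specific wall stability (Proposition~\ref{prop:quiver}, using Theorem~\ref{thm:zalgebra}(2) together with \cite{GordonO} and \cite{BPW}); then the torus-fixed points of that quiver variety are read off combinatorially as $J$-classes of bipartitions (Proposition~\ref{prop:jclasses}, citing \cite{GordonO,Pre}); and finally \cite{GordonMartino} and \cite{Garfinkle} match those $J$-classes with Lusztig families. No direct appeal is made to baby Verma modules or blocks of $\bar H_c$.

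Your route has a genuine gap beyond the one you flag. The $\Z$-algebra localization of Section~\ref{sec:cherednik} and its corollary relate $X_{\fg,symb}$ to $H_c$ at $\hbar=1$, whereas baby Verma modules and Calogero--Moser families live in the representation theory of the \emph{restricted} algebra $\bar H_c$ at $\hbar=0$. These two regimes are not connected by the functor you invoke; what would connect them is precisely the hyper-K\"ahler rotation of Conjecture~\ref{conj:hyperkaehler}, which the paper treats as a separate conjecture and only verifies in types $ABC$ (again via the quiver-variety identification). So the step ``applied to the baby Verma modules \ldots this identifies the $(\C^*)^2$-fixed points with the blocks of $\bar H_c$'' is not justified by anything in Section~\ref{sec:cherednik}; you would need to supply an independent argument that the fixed locus of $X_{\fg,symb}$ agrees with $\CM_c^{\C^*}$, and at present that is exactly the open problem.

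For the weight formula, neither the paper nor your sketch gives a proof outside type~$A$. Your proposed extraction of $(\mathbf{a}(\lambda),\mathbf{A}(\lambda))$ from a ``uniform Macdonald-style character formula'' for the Procesi sheaf presumes a structure theorem for $\rho_*\cO_{Y_{\fg,symb}}$ that is not available in the paper (indeed the paper explicitly disclaims knowing whether $\rho_*\cO_{Y_{\fg,symb}}$ is a Procesi sheaf in Losev's sense). This part of your outline is plausible as a programme but is not yet an argument.
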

 For type $A$, this follows from the usual bijection between fixed points in the Hilbert scheme and partitions. We prove the first part of the conjecture for types $BC$ in the appendix.
Conjecture \ref{conj:fixedpoints} fits well with a conjecture of Bonnaf\'e and Rouquier \cite{BonnafeRouquier}, who have conjectured the existence of a natural bijection between fixed points in Calogero--Moser spaces and two-sided cells. More precisely, we define
\begin{definition}
\label{def:CM}
The Calogero--Moser space of $\fg$ is 
$$\CM_{c}=\Spec Z(H^{rat}_{c,0})$$ where $H_{c,\hbar}$ is the rational Cherednik algebra of $\fg$ from Section \ref{sec:cherednik} and $Z(-)$ denotes taking the center.
\end{definition}
The conjecture we want to compare to is as follows. When $c$ is constant and generic enough, there is a bijection
$$\CM_c^{\C^*}\leftrightarrow \mathfrak{C}_W,$$ where $\C^*$ is the Hamiltonian $\C^*$-action on $\CM_c$.
In fact, it is possible to define certain {\em Calogero--Moser cells} in $W$ using the $\C^*$-fixed points in $\CM_c$. The conjecture of \cite{BonnafeRouquier} is that for Coxeter groups these coincide with the two-sided cells defined by Lusztig.

For types other than $ABC$, Bellamy has shown \cite{Bellamy} that $\CM_c$ is singular for all values of $c$.
Based on the type $ABC$ cases where $X_\fg$ admits a quiver variety description, we conjecture:
\begin{conjecture}
\label{conj:hyperkaehler}
    For any $\fg$, the varieties $\CM_c$ and $X_{\fg, symb}$ admit hyper-Kähler structures such that there is a $U(1)$-equivariant homeomorphism $\CM_c\to X_{\fg,symb}$, given by rotating the complex structure (see, for example, \cite[3.7.]{GordonO}). In particular, there is a bijection $(X_{\fg,symb})^{\C^*}\leftrightarrow \CM_c^{\C^*}$, where $\C^*$ is the Hamiltonian $\C^*\subseteq (\C^*)^2$ on the left.
\end{conjecture}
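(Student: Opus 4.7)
The plan is to reduce the conjecture to cases where both sides admit compatible hyper-Kähler quotient descriptions, handle the remaining simply laced cases via the uniqueness of $\Q$-factorial terminalizations, and isolate the exceptional non-simply-laced types as the main obstacle.

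In types $A$, $B$, and $C$, both $X_{\fg,symb}$ and $\CM_c$ admit quiver variety descriptions. For $X_{\fg,symb}$, this is encoded in the comparison with Nakajima quiver varieties used in Proposition \ref{prop:quiver}; for $\CM_c$, it is classical and uses the same underlying framed quiver (the Hilbert scheme of $\C^2$ in type $A$, and $(\Z/2\Z)$-equivariant Hilbert schemes in types $BC$). In each case one has a common hyper-Kähler manifold $\mathcal{M}$ realized as $\mu_\R^{-1}(\xi_\R)\cap \mu_\C^{-1}(\xi_\C)/U(v)$, where $X_{\fg,symb}$ corresponds to $(\xi_\R,\xi_\C)=(\xi,0)$ and $\CM_c$ to $(\xi_\R,\xi_\C)=(0,c\cdot\mathrm{id})$ for appropriately matched $\xi$ and $c$. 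Rotating the complex structure swaps the roles of the real and complex moment maps and thus exchanges these two parameter regimes, giving the $U(1)$-equivariant homeomorphism; one checks that the Hamiltonian $\C^\times$-action is sent to the non-Hamiltonian $\C^\times\subset (\C^\times)^2$ of $X_{\fg,symb}$. This is a direct adaptation of \cite[3.7.]{GordonO} and the related Kronheimer-style constructions.

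For general simply laced $\fg$, the strategy exploits the rigidity coming from Theorem \ref{thm:qfact}: $X_{\fg,symb}$ is the unique $\Q$-factorial terminalization of $\ft\oplus\ft^*/W$. The Calogero--Moser space $\CM_c$ fits into the Namikawa--Poisson deformation space of this singularity, and for spherical $c$, Theorem \ref{thm:zalgebra} identifies its natural quantization with $\e H_c\e$, matching the quantization of $X_{\fg,symb}$ built from its $\Z$-algebra. Comparing periods of these filtered quantizations in the sense of Losev should yield an identification of $\CM_c$ and $X_{\fg,symb}$ as two real slices of a common Namikawa space, from which a hyper-Kähler rotation can be extracted by invoking the Beauville--Bogomolov decomposition on the smooth locus and extending across the (in this case, symplectic) singular locus.

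The main obstacle is the exceptional non-simply-laced types, especially $F_4$ and $G_2$. Here no quiver variety description of $X_{\fg,symb}$ is known, and by \cite{Bellamy} the space $\CM_c$ is singular for every $c$, so standard Nakajima-type machinery for producing hyper-Kähler metrics does not apply directly. A plausible route is to use the étale-local product description from Proposition \ref{prop:sympsingrat} to transport hyper-Kähler structures from lower-rank building blocks via Luna-style slices; however, assembling these into a global hyper-Kähler metric is a transcendental problem that does not reduce to algebraic input. The most promising path is probably to first settle Conjecture \ref{conj:losev}, identifying $X_{\fg,symb}$ with Losev's partial resolution $X_{\fg,Losev}$, and then transport the hyper-Kähler structure of $\CM_c$ along this identification using a period comparison; verifying compatibility of the $\C^\times$-actions under such a transport is what I expect to be the most delicate step.
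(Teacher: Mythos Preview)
The paper does not prove this conjecture in general; it is stated as a conjecture and only verified in types $A$, $B_n$, $C_n$ in Appendix~\ref{app:proofs}. For those cases the paper's argument is exactly the one you sketch in your first paragraph: Proposition~\ref{prop:quiver} identifies $X_{\fg,symb}$ with a Nakajima quiver variety $M_\theta(Q)$ on the wall $\theta\in\R_{>0}\cdot(0,1)$, and then the Corollary immediately after cites \cite[3.5.--3.7.]{GordonO} for the hyper-K\"ahler rotation between the resolution-type and deformation-type parameters. So on the part the paper actually proves, your approach coincides with the paper's.

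Your remaining two paragraphs attempt more than the paper does, and contain genuine gaps. For the simply laced case, the uniqueness of the $\Q$-factorial terminalization (Theorem~\ref{thm:qfact}) tells you what $X_{\fg,symb}$ is as a complex variety, but it does not produce a hyper-K\"ahler metric on it, nor one on $\CM_c$; outside the quiver-variety world there is no general mechanism supplying such metrics on symplectic singularities or their terminalizations. Your appeal to ``Beauville--Bogomolov decomposition on the smooth locus'' is misplaced: that theorem is about compact K\"ahler manifolds with trivial canonical class, and does not apply to these open, conical varieties. Likewise, a period/quantization comparison in the sense of Losev lives entirely in the holomorphic/algebraic world and cannot by itself manufacture the transcendental datum of a hyper-K\"ahler rotation. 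So the simply laced strategy, as written, does not close. Your assessment of $F_4$ and $G_2$ as the hard cases is reasonable, and the paper agrees implicitly (it leaves everything beyond $ABC$ open), but the same analytic obstruction already blocks the simply laced types $D_n$, $E_6$, $E_7$, $E_8$.
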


\subsection{Further conjectures}
In this section, we make some further speculation on $X_{\fg,symb}$.

There is another reason to believe Conjecture \ref{conj:fixedpoints}. It might seem contrived but was the original motivation for that conjecture before the author learned of Conjecture \ref{conj:bonnafe}. As explained in \cite{KT}, starting from an elliptic regular semisimple element in the loop Lie algebra of $\fg\fl_n$, i.e., $\gamma\in \fg\fl_n((t))$, one can construct a sheaf $\cF_\gamma$ on $\Hilb^n(\C^2)$ supported on the punctual Hilbert scheme. It is quasi-coherent and $(\C^*)^2$-equivariant. Conjecturally, $\cF_\gamma$ is coherent and the localization theorem in $K$-theory lets one write the $K$-class $[\cF_\gamma]=\sum_\lambda a_\lambda(q,t) \tilde{H}_\lambda$, where $\tilde{H}_\lambda$ are the fixed point classes and $a_\lambda(q,t)$ are some rational functions in $\C(q,t)$. If one believes the triangle of connections from the introduction, as explained in \cite{KT}, the functions $a_{\lambda}(q,t)$ are closely related to Shalika germs for the element $\gamma$.

The main construction in \cite{GKO} (which is a precursor to the last section of \cite{KT}) extends to other groups, as discussed in the introduction. For an elliptic regular semisimple element $\gamma\in \fg((t))$, the pushforward of the sheaf $\cF_\gamma$ to the naive Coulomb branch $T^*T^\vee/W$ is supported at finitely many points $(0,b)\in T^*T^\vee/W$, corresponding to the endoscopic decomposition of cohomology. As explained in \cite[Theorem 2.16.]{GKO}, the stable part of cohomology corresponds to $(0,1)$. 

Near this singularity, $T^*T^\vee/W$ is formally locally isomorphic to $\ft\oplus \ft^*/W$, so pulling back $X_{G,Coulomb}$ and the sheaf $\cF_\gamma$ along the inclusion of the formal neighborhood, one gets a (quasi-)coherent sheaf $\cF_\gamma^{rat}$ on $X_{\fg,symb}$ above the formal neighborhood $(\ft\oplus \ft^*/W)^{\wedge 0}$. Using the perverse filtration for the BM homology of affine Springer fibers, it should be possible to upgrade this construction to be equivariant for the $(\C^*)^2$-action on $\ft\oplus \ft^*/W$. Assuming this, we expect that the $K$-theory localization of $\cF_\gamma^{rat}$ to the fixed points on $X_{\fg,rat}$, which can be written as $$[\cF_\gamma]=\sum_x a_\lambda(q,t) [\delta_x],$$ encodes information about the Shalika germs of $\gamma$. Even without knowing which $x$ appear, conjectures of Assem \cite{Assem} suggest that the stable Shalika expansion only contains special nilpotent orbits in $\fg((t))$. These are also in bijection with two-sided cells in $W$.

The last conjecture we make is about the braid invariants. Recall that in \cite[Theorem 1.5.]{GH} it was shown that the $y$-ified or monodromically deformed version of the Hochschild homology of the Rouquier complex of the $k$th power of the full twist braid in $Br_n$ is isomorphic as a doubly graded $\C[x_1,\ldots, x_n,y_1,\ldots, y_n]$-module to $J^k=I^{(k)}$. The proof of that theorem heavily uses the earlier computation of the ordinary Hochschild homology of this Rouquier complex by Elias and Hogancamp in \cite{EH}, in particular the fact that this homological invariant is {\em parity}, or in other words that it is only supported in even homological degrees.

For the braid groups of other types, it should still be true that the Hochschild homology of the Rouquier complex of the full twist is parity. The only evidence we have for this is the purity of the corresponding affine Springer fibers. Though we do not do it here, it is also possible to define ``link-splitting maps'' as in Section 4 of \cite{GH} for other $W$ and prove analogs of their flatness properties. Using these properties and the conjectural parity of the $k$:th powers of the full twists, one can show that in general this monodromically deformed invariant is isomorphic as a doubly graded $\C[\ft\oplus \ft^*]$-module to $I^{(k)}$. We leave this question open.

\appendix
\section{Proof of Conjectures \ref{conj:fixedpoints}, \ref{conj:hyperkaehler} in types $B_n, C_n$}
\label{app:proofs}
\begin{proposition}
\label{prop:quiver}
Let $\fg=\fs\fo_{2n+1}$ or $\fs\fp_{2n}$. Then $X_{\fg,symb}\cong M_\theta(Q)$, where $Q$ is the cyclic quiver with two vertices and dimension vector $(n,n)$ together with a dimension $1$ framing at the extending vertex, and $\theta$ is a generic stability condition on the wall containing $(0,1)$. This identification respects the torus actions.
\end{proposition}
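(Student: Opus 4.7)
The plan is to identify $X_{\fg,symb}$ and $M_\theta(Q)$ as the same normal projective partial symplectic resolution of their common affinization $(\ft\oplus\ft^*)/W$, where $W\cong \Z/2\wr \mathfrak{S}_n$ is the Weyl group of both $B_n$ and $C_n$. On the quiver side, the affine Nakajima quiver variety $M_0(Q)$ for the framed cyclic quiver in question is the Etingof--Ginzburg and Gan--Ginzburg realization of the wreath-product Calogero--Moser space at $c=0$, which equals $(\ft\oplus\ft^*)/W$. On the symbolic side, $X_{\fg,symb}\to(\ft\oplus\ft^*)/W$ is projective by Definition \ref{def:symb}, normal by Theorem \ref{thm:normality}, and has symplectic singularities by Proposition \ref{prop:sympsingrat}; the quiver variety $M_\theta(Q)$ has symplectic singularities by standard Nakajima theory.

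Next, I would invoke Namikawa's classification: normal projective partial symplectic resolutions of $(\ft\oplus\ft^*)/W$ correspond bijectively to faces of the movable cone of any fixed $\Q$-factorial terminalization $\widetilde{X}$. By \cite{BellamyCounts}, in types $B_n$ and $C_n$ one has $H^2(\widetilde{X},\Q)\cong\Q^2$, the two summands corresponding to the two conjugacy classes of reflections in $W$ (sign changes and transpositions), and the Namikawa Weyl group acts as $(\Z/2)^2$. It therefore suffices to show that both $X_{\fg,symb}$ and $M_\theta(Q)$ correspond to the same face of the movable cone, namely the one associated to the wall containing $(0,1)$.

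On the quiver side, variation of GIT identifies the wall containing $(0,1)$ with the partial resolution in which precisely one of the two classes of codimension-$2$ strata in $(\ft\oplus\ft^*)/W$ is resolved while the other is left with transverse $A_1$-singularities. For $X_{\fg,symb}$, an application of the local slice isomorphism \eqref{eq:slice} from the proof of Theorem \ref{thm:zalgebra}, together with the explicit form of $I^{(d)}$ at a generic point of each conjugacy class of codimension-$2$ stratum, shows that the symbolic blow-up likewise resolves exactly one class of such strata. Matching these local analyses with the wall containing $(0,1)$ completes the identification, and $(\C^\times)^2$-equivariance is automatic from the bigradings on both constructions.

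The main obstacle is the explicit matching of which conjugacy class of reflection is resolved on each side, so that both partial resolutions land on the same face of the movable cone. An alternative, and probably cleaner, route would be via filtered quantizations: $X_{\fg,symb}$ is quantized by the $\Z$-algebra of Theorem \ref{thm:zalgebra}, while $M_\theta(Q)$ is quantized by a noncommutative quiver variety, and Losev's classification of filtered quantizations \cite{Losev} reduces the isomorphism to a comparison of periods, bypassing the direct bookkeeping of movable-cone faces altogether.
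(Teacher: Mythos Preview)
Your primary route has a concrete error. The local slice \eqref{eq:slice} you invoke shows that near a generic point of \emph{any} reflection hyperplane the symbolic blow-up reduces to the rank-one picture, which is the minimal resolution of an $A_1$ singularity. Hence $X_{\fg,symb}$ is smooth over the generic locus of \emph{both} codimension-$2$ strata of $(\ft\oplus\ft^*)/W$, not ``exactly one'': the definition of $I^{(d)}$ is symmetric in all positive roots. Whatever singularities $X_{\fg,symb}$ carries in types $BC$ lie over deeper strata, so the face of the movable cone it occupies cannot be read off from codimension-$2$ behaviour alone. The ``main obstacle'' you flag is therefore not residual bookkeeping but the entire content of the proposition, and your slice argument does not address it.

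The paper in fact takes precisely the ``alternative, cleaner route'' you sketch at the end, but carries out the step you omit. It combines Theorem~\ref{thm:zalgebra}(2), which exhibits $\e(\Delta^k I^{(k)})$ as the associated graded of the equal-parameter Cherednik bimodule $N_k$, with Gordon's identification \cite[Theorem~4.1]{GordonO} of these bimodules with quantizations of line bundles on $M_\theta(Q)$, and then applies uniqueness of such quantizations \cite[Proposition~5.2]{BPW}. The substance is the parameter match: in Gordon's conventions $c_1=2h$ and $c_\sigma=H_0-H_1=-2H_1$, so the equal-parameter condition $c_1=c_\sigma$ forces $-h-H_1=0$ and hence $\theta=(0,-2c)$, which (after the sign symmetry of quiver varieties) lies on the wall through $(0,1)$. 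This change of coordinates from Cherednik parameters to quiver stability is nontrivial and is exactly what your geometric argument was trying to circumvent; without it there is no proof.
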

\begin{proof}
This follows from Theorem \ref{thm:zalgebra}(2), \cite[Theorem 4.1.]{GordonO} and the uniqueness of quantizations of line bundles in \cite[Proposition 5.2]{BPW}. In the notation of Gordon, we have $c_1=2h$ and $c_\sigma=H_0-H_1=-2H_1$. Since $c_1=c_\sigma=c$, i.e., we are in the equal parameter case, the stability condition $\theta$ in loc. cit. becomes $(0,-2c)$, because $-h-H_1=0$ and $H_1=-2c$. Since the quiver varieties are isomorphic under negating the stability condition, we see we are on the positive vertical wall and therefore have the desired isomorphism.
\end{proof}
\begin{corollary}
Conjectures \ref{conj:losev} and \ref{conj:hyperkaehler} are true in types $B_n, C_n$.
\end{corollary}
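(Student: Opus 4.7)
The plan is to combine Proposition \ref{prop:quiver} with the known quiver-variety descriptions of $X_{\fg,\mathrm{Losev}}$ and $\CM_c$ in types $B_n$ and $C_n$. Proposition \ref{prop:quiver} supplies the main input: $X_{\fg,symb} \cong M_\theta(Q)$ for the two-cycle framed quiver $Q$ of dimension vector $(n,n)$ with unit framing, and $\theta$ lying on the wall containing $(0,1)$.

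For Conjecture \ref{conj:losev}, I would first invoke the classification in \cite{BellamyCounts} to identify all $\Q$-factorial terminalizations of $\ft \oplus \ft^*/W$ in types $B_n, C_n$ with generic-chamber quiver varieties $M_{\theta'}(Q)$ for the same $Q$. Choosing a generic $\theta'$ adjacent to the wall from Proposition \ref{prop:quiver} yields a terminalization $\widetilde{X} = M_{\theta'}(Q)$ that contracts to $M_\theta(Q) \cong X_{\fg,symb}$. The image must then be identified with $X_{\fg,\mathrm{Losev}}$: by \cite[Remark 2.2]{BAL}, the wall containing $(0,1)$ is precisely the one corresponding to the filtered quantization defining Losev's variety, so applying the BAL criterion at this wall produces the isomorphism.

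For Conjecture \ref{conj:hyperkaehler}, I would exploit Nakajima's hyper-Kähler quotient construction. The quiver variety $M_{\theta,\lambda}(Q)$ depends on a real stability parameter $\theta$ and a complex moment-map deformation $\lambda$ which are interchanged by hyper-Kähler rotation of the complex structures $I,J,K$. On one side, $X_{\fg,symb} \cong M_{\theta,0}(Q)$ sits at nontrivial $\theta$ with vanishing complex deformation. On the other side, by the argument of \cite[Theorem 4.1]{GordonO} (or, equivalently, by Etingof--Ginzburg applied to $G(2,1,n)$), one has $\CM_c \cong M_{0,\lambda(c)}(Q)$ for the same framed quiver, with $c$ controlling $\lambda$. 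Rotating the complex structure yields the desired $U(1)$-equivariant homeomorphism $\CM_c \to X_{\fg,symb}$, and in particular a bijection on $\C^*$-fixed points.

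The main obstacle is making the parameter dictionary precise: I need to verify that the wall of Proposition \ref{prop:quiver} coincides with the wall associated to Losev's quantization under the Namikawa period map, and that generic equal-parameter $c$ for $\CM_c$ maps under hyper-Kähler rotation into the closure of the cone containing our $\theta$. Both reduce to Namikawa period computations, and require some care in distinguishing $B_n$ from $C_n$ because of the long/short-root asymmetry and the resulting different identifications of the parameters $(H_0, H_1)$ of \cite{GordonO} with the Cherednik parameter $c$; once this dictionary is pinned down, the rest is formal.
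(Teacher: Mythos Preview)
Your proposal is correct and follows essentially the same route as the paper: the paper's proof simply cites \cite[Remark~2.2]{BAL} for Conjecture~\ref{conj:losev} and \cite[3.5--3.7]{GordonO} for Conjecture~\ref{conj:hyperkaehler}, and your outline unpacks exactly these references via Proposition~\ref{prop:quiver}. Your ``main obstacle'' is not one: the parameter matching is already carried out in the proof of Proposition~\ref{prop:quiver} (the equal-parameter condition $c_1=c_\sigma$ forces the wall containing $(0,1)$), and there is no $B_n$/$C_n$ distinction to track since both types share the same Weyl group and reflection representation, hence the same $X_{\fg,symb}$ and $\CM_c$.
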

\begin{proof}
The first one is \cite[Remark 2.2.]{BAL}, and the second one follows from \cite[3.5.--3.7.]{GordonO}.
\end{proof}

For the second result we need to set up some notation. Recall the core-and-quotient construction establishing a bijection between $\ell$-multipartitions of $n$ and partitions of $n\ell+r$ with a fixed $\ell$-core partition $s$ of size $r$. This gives an injective map $$\tau_s: P(\ell,n)\to P(n\ell+r)$$
In our case $r=0$ and $\ell=2$, in which case we write $\tau_s$ as a bijection 
$$\tau: P(2,n)\to \{\lambda\in P(2n)|\lambda \text{ has trivial $2$-core}\}$$
Given a partition $\lambda$ of any size and $\ell\geq 1$, recall that for $0\leq i\leq \ell-1$ a box in the Young diagram of $\lambda$ is called $i$-addable (resp. $i$-removable) if its content is congruent to $i$ mod $\ell$ and it is addable (resp. removable). Given a subset $J\subseteq \{0,\ldots,\ell-1\}$, we call the $J$-heart of $\lambda$ the partition obtained by repeatedly removing all $i$-removable boxes for all $i\in J$. Having the same $J$-heart sets up an equivalence relation on partitions of varying or fixed size, and partitions these sets into equivalence classes called $J$-classes.
\begin{proposition}
\label{prop:jclasses}
The torus-fixed points $M_\theta(Q)^{(\C^*)^2}$ defined as above are in bijection with $J$-classes in $\tau(P(2,n))$, where $J=\{0\}$.
\end{proposition}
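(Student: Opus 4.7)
The plan is to pass to a nearby generic stability parameter, use the classical description of torus-fixed points there, and then track which fixed points get collapsed by the partial resolution to $M_\theta(Q)$.

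First, I would choose a generic stability parameter $\theta'$ obtained by slightly perturbing $\theta$ off the wall containing $(0,1)$ into an adjacent open chamber. For such $\theta'$ the variety $M_{\theta'}(Q)$ is a full symplectic resolution of $\ft\oplus\ft^*/W$, and the standard parametrization of $(\C^*)^2$-fixed points on Nakajima quiver varieties of the cyclic quiver $\widehat{A}_1$ with framing dimension $1$ yields a bijection
\[
M_{\theta'}(Q)^{(\C^*)^2}\;\longleftrightarrow\;P(2,n),
\]
given by $2$-coloured monomial ideals (equivalently, fixed points on the $\Z/2\Z$-equivariant Hilbert scheme). Composing with the core–quotient map $\tau$ of the paper, we may equivalently label these fixed points by the set $\tau(P(2,n))$ of partitions of $2n$ with trivial $2$-core.

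Next, I would analyze the wall-crossing contraction $\pi\colon M_{\theta'}(Q)\to M_\theta(Q)$. By the proof of Proposition~\ref{prop:quiver} we have $\theta_0=0$ and $\theta_1\neq 0$, so $\theta$ sits on a single wall of the chamber decomposition, namely the wall perpendicular to the affine simple root $\alpha_0$. By Nakajima's wall-crossing theory for cyclic-quiver varieties, the collapsing performed by $\pi$ is governed by the $\widehat{\mathfrak{sl}}_2$-action on $\bigoplus_n H^*(M_{\theta'}(Q))$; more precisely, the $\P^1$-bundles contracted by $\pi$ are exactly those swept out by the $\fs\fl_2$-triple associated with the root $\alpha_0$. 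Translated to the combinatorial side via the geometric realization of the affine Fock space by Nakajima and Vasserot, this $\fs\fl_2$-action on fixed points is the Kashiwara crystal action adding or removing $0$-boxes. Consequently, two fixed points of $M_{\theta'}(Q)$ are identified by $\pi$ iff the corresponding partitions in $\tau(P(2,n))$ lie in the same $\{0\}$-class of the paper.

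Finally, since $\pi$ is $(\C^*)^2$-equivariant and projective, every $(\C^*)^2$-fixed point of $M_\theta(Q)$ lifts to a fixed point of $M_{\theta'}(Q)$ (a projective $(\C^*)^2$-variety always has a fixed point in each fibre over a fixed point of the base), so $\pi$ induces a surjection $M_{\theta'}(Q)^{(\C^*)^2}\twoheadrightarrow M_\theta(Q)^{(\C^*)^2}$. Combining the previous two steps gives
\[
M_\theta(Q)^{(\C^*)^2}\;\longleftrightarrow\;\tau(P(2,n))/\!\sim_{\{0\}},
\]
which is the desired bijection with $\{0\}$-classes in $\tau(P(2,n))$.

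The main obstacle is bookkeeping: matching the wall containing $(0,1)$ with the simple root $\alpha_0$ (as opposed to $\alpha_1$), and matching the Nakajima/Kashiwara $0$-crystal action on Young diagrams with the $J=\{0\}$ box-removal operation in the paper. The cleanest way around this is to invoke Gordon's explicit fixed-point calculation on this wall (\cite[Theorem~4.1]{GordonO}, already used in Proposition~\ref{prop:quiver}) together with the standard identification of the resulting combinatorics with $\{0\}$-heart equivalence, rather than reconstructing the chamber structure of $\theta$-space from scratch.
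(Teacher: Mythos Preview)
The paper's proof is a one-line citation: it invokes \cite[Proposition~8.3]{GordonO} (with a pointer to \cite[Corollary~10.4 and Remark~10.5]{Pre}) and nothing more. Your proposal instead sketches a direct geometric argument via wall-crossing from a generic $\theta'$ to $\theta$, which is essentially an outline of how Gordon's result is proved rather than a new approach. In that sense you are not taking a different route so much as unpacking the reference the paper chooses to cite; your own final paragraph already converges to the paper's strategy of simply invoking Gordon.

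One point of care in your sketch: the passage ``this $\mathfrak{sl}_2$-action on fixed points is the Kashiwara crystal action adding or removing $0$-boxes, hence two fixed points are identified by $\pi$ iff they lie in the same $\{0\}$-class'' conflates two different equivalence relations. A single $\alpha_0$-crystal string (the orbit under $\tilde e_0,\tilde f_0$) is in general strictly smaller than a $\{0\}$-class, since the $J$-heart is obtained by removing \emph{all} $0$-removable boxes repeatedly, not just the ``good'' one selected by the crystal rule. What the contraction $\pi$ actually collapses is the entire $S$-equivalence class under setting $\theta_0=0$, and identifying this with $\{0\}$-heart equivalence is precisely the content of Gordon's computation. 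So your outline has the right shape, but the crucial identification cannot be read off from the crystal structure alone; it requires the explicit fixed-point analysis you defer to in your last paragraph --- which is exactly the citation the paper makes.
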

\begin{proof}
This is \cite[Proposition 8.3.]{GordonO}. See also \cite[Corollary 10.4 and Remark 10.5]{Pre}.
\end{proof}
\begin{proposition}
    There is a bijection between $\tau(P(2,n))$ and irreducible representations of $W$, such that the partition of $\tau(P(2,n))$ into $0$-classes induces the partition of $\text{Irr}(W)$ into families. In particular, Conjecture \ref{conj:fixedpoints} is true in this case.
\end{proposition}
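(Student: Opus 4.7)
The plan is to factor the desired bijection through a chain of combinatorial identifications. First, I would invoke the classical Clifford-theoretic parametrization $\text{Irr}(W)\leftrightarrow P(2,n)$ for $W=(\Z/2)^n\rtimes S_n$, the common Weyl group of types $B_n$ and $C_n$; composing with $\tau^{-1}$ gives a bijection $\tau(P(2,n))\leftrightarrow\text{Irr}(W)$.

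The central step is then to verify that the $\{0\}$-classes in $\tau(P(2,n))$ correspond exactly to Lusztig's families in $\text{Irr}(W)$ under this bijection. Both equivalence relations admit standard combinatorial descriptions: on the partition side, two partitions of $2n$ with trivial $2$-core lie in the same $\{0\}$-class iff their $2$-abaci agree on the residue-$1$ runner and differ only by bead moves on the residue-$0$ runner; on the representation side, Lusztig's equal-parameter families in type $B_n/C_n$ are the similarity classes of symbols, i.e., bipartitions whose symbols share the same underlying multiset of entries. Once the normalizations for $\beta$-numbers and the symbol defect are aligned, these two equivalence relations coincide on $P(2,n)$; the matching is already implicit in the combinatorics underlying Proposition \ref{prop:jclasses} drawn from \cite{GordonO, Pre}, and I would write it out explicitly in our conventions.

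Finally, by a theorem of Lusztig, in a Weyl group at equal parameters the families in $\text{Irr}(W)$ are in canonical bijection with the set $\mathfrak{C}_W$ of two-sided cells. Chaining everything together with Propositions \ref{prop:quiver} and \ref{prop:jclasses} yields
\[
X_{\fg,symb}^{(\C^*)^2}\;\longleftrightarrow\;M_\theta(Q)^{(\C^*)^2}\;\longleftrightarrow\;\{\text{$\{0\}$-classes}\}\;\longleftrightarrow\;\{\text{families}\}\;\longleftrightarrow\;\mathfrak{C}_W,
\]
which establishes Conjecture \ref{conj:fixedpoints} in types $B_n$ and $C_n$.

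The main obstacle is the central combinatorial step: each side ($\{0\}$-classes of abaci and Lusztig families of symbols) is well understood in isolation, but pinning down the correct normalizations so that the two partitions of $\tau(P(2,n))$ coincide on the nose, rather than merely up to an unspecified permutation of families, is the essential bookkeeping the proof must perform.
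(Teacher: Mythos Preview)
Your plan is correct and traces the same chain of identifications the paper relies on. The only difference is that the paper does not carry out the central combinatorial step itself: its entire proof is a citation to \cite[Theorem~3.3]{GordonMartino} together with the results of \cite{Garfinkle}, which already establish that under the bipartition/symbol parametrization of $\text{Irr}(W)$ the $\{0\}$-classes in $\tau(P(2,n))$ coincide with Lusztig's equal-parameter families. In other words, the ``main obstacle'' you flag---aligning the abacus/$\beta$-number conventions with the symbol defect so that the two partitions match on the nose---is precisely what Gordon--Martino worked out, so you can replace your proposed direct verification by an appeal to their theorem. Your final chain through Propositions~\ref{prop:quiver} and~\ref{prop:jclasses} and Lusztig's families/cells bijection is then exactly how the paper concludes the first part of Conjecture~\ref{conj:fixedpoints}.
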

\begin{proof}
This follows from \cite[Theorem 3.3.]{GordonMartino} and the results of \cite{Garfinkle}.
\end{proof}

\begin{example}
    We illustrate the bijection of the previous proposition. Recall that in \cite{LusztigBook}, the families can be described using symbols with the same entries.
    A natural bijection between bipartitions and symbols of rank $n$ and defect $1$ is easy to set up, so that a given bipartition $(\lambda,\mu)$ is associated to a symbol as follows:
    If $$\lambda_1\leq\ldots\leq \lambda_{\ell(\lambda)},\;\mu_1\leq\ldots\leq\mu_{\ell(\mu)}$$ are the partitions written in increasing order (possibly empty), if necessary add $k$ zeros to the front of $\lambda$ or $\mu$ so that the resulting pair of nondecreasing sequences of integers $(\lambda',\mu')$ satisfies $\ell(\lambda')=\ell(\lambda)+k=\ell(\mu)+1$ or
    $\ell(\mu')=\ell(\mu)+k=\ell(\lambda)-1$
 depending on whether $\ell(\lambda)\leq \ell(\mu)$ or $\ell(\lambda)>\ell(\mu)$. Now consider the symbol with rows $$
    \begin{bmatrix}
        \lambda_1'+0&&\lambda_2'+1&&\cdots&&\lambda_{\ell(\lambda')}'+\ell(\lambda')-1\\
        &\mu_1'+0&&\cdots&&\mu_{\ell(\mu')}'+\ell(\mu')-1&
    \end{bmatrix}$$
    
    Now let $n=3$. Then $P(2,3)$ consists of $10$ bipartitions. 

We list these, the partitions of $6$ with trivial $2$-core together with their contents mod $2$, as well as the corresponding symbols.
$$\begin{array}{|c|c|c|c|}\hline
\text{Partition}& \text{$0$-Heart} & \text{Bipartition}& \text{Symbol} \\\hline
{\small \young(1,0,1,0,1,0)} &  {\small \young(1,0,1,0,1,0)} &  ({\tiny \emptyset,\yng(1,1,1)}) &  
 \begin{bmatrix}
0&&1&&2&&3\\&1&&2&&3& 
\end{bmatrix}\\\hline
{\small \young(0,1,0,1,01)}&  {\small \young(1,0,1,01)}&  ({\tiny \yng(1,1,1),\emptyset}) &  \begin{bmatrix}
1&&2&&3\\&0&&1& 
\end{bmatrix}  \\\hline
{\small \young(1,0,10,01)}& {\small \young(1,0,1,01)}&  ({\tiny \yng(1),\yng(1,1)}) &    
\begin{bmatrix}
0&&1&&3\\&1&&2& 
\end{bmatrix} 
\\\hline
{\small \young(1,0,1,010)}&  {\small \young(1,0,1,01)}&  ({\tiny \emptyset,\yng(1,2)}) &  
\begin{bmatrix}
0&&1&&2\\&1&&3& 
\end{bmatrix} \\\hline
{\small \young(0,1,0101)}&  {\small \young(1,0101)}&  ({\tiny \yng(1,2),\emptyset}) &  
\begin{bmatrix}
1&&3\\&0& 
\end{bmatrix}\\\hline
{\small \young(10,0101)}&  {\small \young(1,0101)}& ({\tiny \yng(2),\yng(1)}) &
\begin{bmatrix}
0&&3\\&1& 
\end{bmatrix} \\\hline
{\small \young(1,01010)}&  {\small \young(1,0101)}& (\tiny{\emptyset},\yng(3)) &  \begin{bmatrix}
0&&1\\&3& 
\end{bmatrix} \\\hline
{\small \young(101,010)}&  {\small \young(101,010)}&  ({\tiny \yng(1),\yng(2)}) &  
\begin{bmatrix}
0&&2\\&2& 
\end{bmatrix}\\\hline
{\small \young(01,10,01)}&  {\small \young(01,10,01)}& ({\tiny \yng(1,1),\yng(1)}) &  \begin{bmatrix}
1&&2\\&1& 
\end{bmatrix}   \\\hline
{\small \young(101010)}&  {\small \young(10101)}&  ({\tiny \yng(3),\tiny{\emptyset}}) &  \begin{bmatrix}
3\\- 
\end{bmatrix}   \\\hline
\end{array}
$$

\end{example}

\section{Computer calculations}
\label{sec:examples}

We have included the code used for the $G_2$ and $B_3$ calculations below, hoping that they are illustrative of the general procedure. 

\begin{example}
The root system of type $G_2$ can be realized in the plane $x+y+z=0$ inside $\R^3$, with positive simple roots $\beta=2x-y-z$ and $\alpha=x-y$. We can thus realize $W$ as the reflection group generated by the two simple reflections about the corresponding root hyperplanes. In the code below, $M_1$ is the matrix for $s_\alpha$ and $M_2$ is the matrix for $s_{\beta+3\alpha}$.

\begin{lstlisting}
julia> M1=matrix(QQ,[0 1 0;1 0 0; 0 0 1]);

julia> M2=matrix(QQ,[-1//3 2//3 2//3; 2//3 2//3 -1//3; 2//3 -1//3 2//3]);

julia> G=matrix_group([M1,M2]);

julia> M1D=block_diagonal_matrix([M1, transpose(inv(M1))]);M2D=block_diagonal_matrix([M2,transpose(inv(M2))]);GD=matrix_group([M1D,M2D]);

julia> F=abelian_closure(QQ)[1];IR=invariant_ring(GD);R=polynomial_ring(IR);x=gens(R);

julia> chi=Oscar.class_function(GD,[F(det(representative(c))) for c in conjugacy_classes(G)]);

julia> ri=relative_invariants(IR,chi);

julia> J=ideal(R,ri);

julia> I1=ideal(R,[x[1]-x[2],x[4]-x[5]]);I2=ideal(R,[x[1]-x[3],x[4]-x[6]]);I3=ideal(R,[x[2]-x[3],x[5]-x[6]]);I4=ideal(R,[2*x[1]-x[2]-x[3],2*x[4]-x[5]-x[6]]);I5=ideal(R,[2*x[2]-x[1]-x[3],2*x[5]-x[4]-x[6]]);I6=ideal(R,[2*x[3]-x[1]-x[2],2*x[6]-x[4]-x[5]]);I=intersect(I1,I2,I3,I4,I5,I6);

julia> J==I
true
\end{lstlisting}
\end{example}
\begin{example}
In type $B_n$, the root system can be realized in $\R^n$ by $\pm x_i, \pm(x_i\pm x_j), i,j=1,\ldots,n$. Similarly, the type $C_n$ root system can be realized by $\pm 2x_i, \pm (x_i \pm x_j)$
The hyperoctahedral group $W$ acts in the obvious way. We can take $\ft\oplus \ft^*$ to be $\C^{2n}$ with coordinates $x_1,\ldots, x_n,y_1,\ldots, y_n$ and write 
$$I=\cap_i\langle x_i, 2y_i\rangle \bigcap \cap_{i<j} \langle  x_i \pm x_j, y_i\pm y_j\rangle.$$

Below is the code for the $B_3$-case, computing $I$ and $J$. One can check that $I$ has one extra generator in its minimal generating set, which is not $W$-invariant. In particular, one has $J\subsetneq I$ but $\e I=\e J$. 
\begin{lstlisting}
julia> M1=matrix(QQ,[0 1 0; 1 0 0; 0 0 1]);M2=matrix(QQ,[1 0 0; 0 0 1; 0 1 0]);M3=matrix(QQ,[-1 0 0; 0 1 0; 0 0 1]);W=matrix_group([M1,M2,M3]);

julia> M1D=block_diagonal_matrix([M1,transpose(inv(M1))]);M2D=block_diagonal_matrix([M2,transpose(inv(M2))]);M3D=block_diagonal_matrix([M3,transpose(inv(M3))]);WD=matrix_group([M1D,M2D,M3D]);

julia> IR=invariant_ring(WD);R=polynomial_ring(IR);x=gens(R); F=abelian_closure(QQ)[1];

julia> chi=Oscar.class_function(WD,[F(det(representative(c))) for c in conjugacy_classes(W)]);

julia> ri=relative_invariants(IR,chi);J=ideal(R,ri);

julia> I1=ideal(R,[x[1]-x[2],x[4]-x[5]]); I2=ideal(R,[x[1]-x[3],x[4]-x[6]]); I3=ideal(R,[x[2]-x[3],x[5]-x[6]]);I4=ideal(R,[x[1]+x[2],x[4]+x[5]]); I5=ideal(R,[x[1]+x[3],x[4]+x[6]]); I6=ideal(R,[x[2]+x[3],x[5]+x[6]]);I7=ideal(R,[x[1],x[4]]);I8=ideal(R,[x[2],x[5]]);I9=ideal(R,[x[3],x[6]]);

julia> I=intersect(I1,I2,I3,I4,I5,I6,I7,I8,I9);

julia> I==J
false
\end{lstlisting}

\end{example}

\subsection*{Acknowledgments}
The computer experiments in Appendix \ref{sec:examples} were computed with the OSCAR computer algebra system \cite{OSCAR}, with help from Johannes Schmitt and Ulrich Thiel. We also thank Alex Weekes for correspondence, Gwyn Bellamy and C\'edric Bonnaf\'e for comments on a draft of this paper, and Ivan Losev for helpful comments.

\end{document}